\newtheorem{theorem}{Theorem}[section]
\newtheorem{corollary}[theorem]{Corollary}
\newtheorem{main}{Main Theorem}
\newtheorem{lemma}[theorem]{Lemma}
\theoremstyle{definition}
\newtheorem{definition}[theorem]{Definition}
\newtheorem{remark}[theorem]{Remark}
\newtheorem{example}{Example}
\begin{document}
\title{Dynamical properties in the axiomatic theory of ordinary differential equations}
\author{Tomoharu Suda\fnref{fn1}}
\ead{tomoharu.suda@riken.jp}
\address{Department of Mathematics, Faculty of Science and Technology,
Keio University, 3-14-1 Hiyoshi, Kohoku-ku, 
Yokohama, JAPAN}
\fntext[fn1]{Present address: RIKEN Center for Sustainable Resource Science, RIKEN, Wako, Japan}
\begin{keyword}
axiomatic theory of ordinary differential equations \sep generalized dynamical system \sep weak invariance \sep generalized semiflow \sep multivalued semigroup \sep invariant measure
\end{keyword}
\begin{abstract}
The axiomatic theory of ordinary differential equations, owing to its simplicity, can provide a useful framework to describe various generalizations of dynamical systems. In this study, we consider how dynamical properties can be generalized to this setting. First, we study the generalizations of dynamical properties, such as invariance and limit sets and show that compact weakly invariant sets can be characterized in terms of the corresponding shift-invariant sets in the function space. This result enables us to consider weakly invariant sets in terms of shift-invariant sets. Next, we compare the axiomatic theory of ODE with other formalisms of generalized dynamical systems, namely generalized semiflow and multivalued semigroup. Finally, we introduce the notion of invariant measures and show that we can generalize classical results, such as Poincar\'e recurrence theorem. 
\end{abstract}

\maketitle
\section{Introduction}
 
Generalizations of dynamical systems are useful when considering evolution equations in the context of infinite-dimensional dynamical systems as well as applications such as control systems, differential inclusions, and piecewise-smooth vector fields, as they lack the uniqueness of time evolution \cite{ball1997continuity, melnik1998attractors, caraballo2003comparison, bernardo2008piecewise}. Although continuous-time dynamical systems are generally described using the concept of flow, applying this concept directly to such cases is impossible. Therefore, a generalizing framework is required. Various theories, such as generalized semiflow \cite{ball1997continuity} and multivalued semiflow \cite{melnik1998attractors} have been proposed (a more comprehensive description can be found in the introduction of \cite{ball1997continuity}).
 
When considering generalized dynamical systems arising from generalized ordinary differential equations (ODEs), the axiomatic theory of ordinary differential equations is a particularly interesting framework as it formalizes the conditions that solutions of a well-posed ordinary differential equation satisfy as axioms and considers how subspaces of a function space behave under these axioms. Furthermore, this approach can proceed from the necessary conditions rather than properties that are sufficiently good to construct analogies of ordinary dynamical systems. This feature enables us to obtain results that hold true in a general setting.

There are two main axiomatic theories of ordinary differential equations: Filippov and Yorke \cite{Filippov_1993, filippov_basic, yorke1969spaces}. Filippov's theory has been studied extensively compared to Yorke’s framework. For example, known results of Filippov's theory include generalization of the Poincare-Bendixon theorem \cite{filippov_basic}, the connection with the notion of local dynamical systems \cite{mychka2010space}, and the results on asymptotically autonomous systems \cite{Klebanov1994}. However, in connection with dynamical systems theory, working within Yorke's theory appears easier. In fact, Yorke’s first paper discusses the dynamical properties; moreover, dynamical systems of shifts in function spaces form the basis of the theory. Furthermore, a topological classification of dynamical systems can be obtained when generalizing the notion of dynamical systems using Yorke's framework \cite{suda2022equivalence}.

Therefore, this study discusses how Yorke's axiomatic theory of ordinary differential equations generalizes the properties of dynamical systems, particularly those related to invariance and limit sets. Furthermore, we discuss how Yorke's theory relates to other frameworks, such as generalized semiflow and multivalued semigroups, for generalizing dynamical systems. Then, we define the notion of an invariant measure in the axiomatic theory of ordinary differential equations as an application of the results obtained and show that a generalization of the classical result holds. 

Before we state the main results of this article, we briefly summarize how Yorke's theory describes a generalized dynamical system. Because the solutions of an ordinary differential equation are generally definable only locally, we consider a subspace $S$ of a space of non-continuable partial maps $C_s(\mathbb{R}, X)$ (Definition \ref{def_cs}) as a solution space. The space $S$ can be interpreted as describing a generalized dynamical system on $X$ if $S$ is invariant under the shift $\sigma$ on $C_s(\mathbb{R}, X)$, which is naturally defined. Because $\sigma$ is a flow on $S$ in this case, we consider the dynamics induced by an evaluation map $\pi: S \to X$ defined by $\pi(\phi) = \phi(0)$. For example, a subset $A \subset X$ can be defined as weakly invariant if 
all $x \in A$ has a corresponding map $\phi \in S$ such that $\pi(\phi) = x$ and $\pi\qty(\sigma(t, \phi) )\in A$ for all $t$.

The basic principle, which is stated in the next theorem, is that a compact weakly invariant set in the phase space corresponds to a well-behaved shift-invariant subset in solution space. This is a generalization of classical results known for Bebutov dynamical systems (Theorems 1.20 and 2.20 in \cite{sibirskij1975introduction}). The key assumption here is the validity of the compactness axiom (Definition \ref{def_axioms}). In rough terms, this condition says that the set of all ``solutions" passing a compact set is compact.
\begin{main}
Let $S\subset C_s(\mathbb{R}, X)$ be a shift-invariant set that satisfies the compactness axiom. 
Then, there exists a one-to-one correspondence between the sets
\[
  \{S' \subset S \mid \text{shift invariant, maximal w.r.t. phase space and }\pi(S')\text{ is compact} \}
\]
and
\[
	\{A\subset X\mid A \text{ is weakly invariant and compact} \}.
\]
\end{main}
 According to Theorem A, a limit set in the solution space $S$ induces a weakly invariant set. We use this property to define a limit set $\omega_S(x)$ for point $x$ in the phase space. Under this definition, the classical results can be generalized.
 \begin{main}
Let $S$ be a shift-invariant set that satisfies the compactness axiom. Then, limit sets $\omega_S(x)$ and $\alpha_S(x)$ are closed and weakly invariant for all $x \in X$.
\end{main}
The next theorem states a connection between the axiomatic theory of ODEs, generalized semiflow, and multivalued semigroups. This result implies that Yorke's theory can be applied to the set of all continuous complete trajectories, which plays an important role in the theory of invariant measures for the multivalued semigroup introduced by Moussa \cite{moussa2017invariant}. 
 \begin{main}
Let $V(t)$ be a multivalued semigroup determined by a generalized semiflow $G \subset C(\mathbb{R}_{\geq 0},X)$ on a compact metric space $X$. 
Then, the set $\mathcal{K}$ of all continuous complete trajectories is nonempty and shift-invariant. As a solution space on $\pi\qty(\mathcal{K})$, $\mathcal{K}$ satisfies existence, compactness, and switching axioms.
\end{main}
Using the observation of Theorem C, we can generalize the definition of invariant measures to the axiomatic theory of the ODE. For solution sets in a compact metric space with the compactness axiom, we can generalize classical theorems on the invariant measures of flow. In particular, the next generalization of the topological Poincar\'e recurrence theorem holds true. This will be construed as a justification for our definition of limit sets and invariant measures. 

\begin{main}
Let $\mu$ be an invariant measure of a shift-invariant subset $S \subset C(\mathbb{R}, X)$. Then, the closure of the set of all recurrent points for $S$ is a full-measure set.
\end{main}

The remainder of this paper is organized as follows. Section \ref{prelim} presents preliminary definitions and results. In particular, an outline of Yorke's axiomatic theory of ODE is presented. Section \ref{solsp} discusses the topological properties of a solution space and how the dynamics in the phase space correspond to those in the solution space. The proof of Theorem A is provided in this section. In Section \ref{asym}, we consider the asymptotic properties of the dynamics induced by the solution space. In particular, we introduce the notion of limit set. Section \ref{comp} shows how the axiomatic theory of ODE relates to other existing formalisms, namely generalized semiflow and multivalued semigroup. Finally, Section \ref{inv_meas} introduces the notion of invariant measures and presents some generalizations of the classical results. Section \ref{conc} contains concluding remarks.

\section{Preliminaries}\label{prelim}
In this section, we present some definitions and the basic results used in the discussion.
\subsection{General topology of partial maps}
To introduce Yorke's axiomatic theory of ODE, we first present the basic definitions and properties of partial maps. We consider partial maps with open domains, as studied by Abd-Allah and Brown \cite{abd1980compact}. 
\begin{definition}[Partial maps]
Let $X$ and $Y$ be topological spaces.
A \textbf{partial map} is a pair $(D, f)$ of subsets $D \subset X$ and a map $f: D \to Y.$ For a partial map $(D,f)$, the set $D$ is called the \textbf{domain}, denoted by $\mathrm{dom}\, f$. By an abuse of notation, we denote a partial map $(D, f)$ by $f: X \to Y.$ The \textbf{image} of a partial map is the set $\mathrm{im}\, f := f \left(\mathrm{dom}\,f \right).$

A partial map $f:X \to Y$ is \textbf{ with open domain} if $\mathrm{dom}\,f$ is open and $f: \mathrm{dom}\,f \to Y$ is continuous.
\end{definition}
If space $Y$ is a separable metric space, the topology of the space of all partial maps $C_p(X, Y)$ can be defined by uniform convergence on compact sets. In other words, a sequence of partial maps $\phi_n \in C_p(X, Y)$ converges to $\phi \in C_p(X,Y)$ as $n \to \infty$ if, for each compact $K \subset \mathrm{dom}\, \phi,$ $K \subset \mathrm{dom}\, \phi_n$ for sufficiently large $n$ and $\sup_{x \in K} d(\phi(x), \phi_n(x))$ converges to $0$ as $n \to \infty$. This topology coincides with the compact-open topology defined by subbases $W(K, V) :=\{\phi \in C_p(X, Y) \mid K \subset \mathrm{dom}\, \phi \text{ and } \mathrm{im}\,\phi\subset V\},$ where $K \subset X$ is compact and $V \subset Y$ is open (Lemma 2.3 in \cite{suda2022equivalence}).

The inverse image of a partial map can be defined as follows. Here, we define for all $\phi \in C_p(X,Y)$ and $A \subset Y,$
\[\phi^{-1} (A) := \{x \in X \mid x \in \mathrm{dom}\, \phi \text{ and } \phi(x) \in A\}.\] 
Under this convention, the inverse image of an open set is open.

In what follows, we assume that $X = \mathbb{R}$ and $Y$ is a separable metric space. Because we consider only partial maps with a real variable, we use the letter $X$ for the phase space instead of $Y$. For example, the space of all partial maps is denoted by $C_p(\mathbb{R}, X).$ Here, we note that $C_p(\mathbb{R}, X)$ is second countable if $X$ is a separable metric space (Proposition 2.1 in \cite{alvarez2016topological}), which justifies the use of sequences when considering topological properties.

An important example of a partial map with an open domain is the evaluation map, which is used later.
\begin{definition}
We define $e: \mathbb{R}\times C_p(\mathbb{R}, X) \to \mathbb{R}\times X$ as
\[
	e(t, \phi):=\qty(t, \phi(t))
\]
if $t \in \mathrm{dom}\,\phi.$
\end{definition}
\begin{lemma}\label{lem_eval}
The map $e$ is a partial map with open domain.
\end{lemma}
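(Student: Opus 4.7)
The plan is to verify the two requirements separately: first that the domain
\[
	D := \{(t,\phi)\in \mathbb{R}\times C_p(\mathbb{R},X) \mid t \in \mathrm{dom}\,\phi\}
\]
is open, and second that $e: D \to \mathbb{R}\times X$ is continuous. Both parts exploit the description of the topology of $C_p(\mathbb{R}, X)$ either through the compact-open subbase $W(K,V)$ or, equivalently, through uniform convergence on compact subsets of the domain.

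To show $D$ is open, I would fix $(t_0,\phi_0)\in D$. Since $\mathrm{dom}\,\phi_0$ is open, there exists $\delta>0$ with $K:=[t_0-\delta,t_0+\delta]\subset \mathrm{dom}\,\phi_0$, so $\phi_0\in W(K,X)$. Then the product
\[
	(t_0-\delta,t_0+\delta)\times W(K,X)
\]
is an open neighbourhood of $(t_0,\phi_0)$: for any $(s,\phi)$ in it we have $s\in K\subset \mathrm{dom}\,\phi$, hence $(s,\phi)\in D$. Here the only point to note is that $W(K,X)$ really does force $K$ into the domain of every member, which is immediate from its definition.

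For continuity at $(t_0,\phi_0)\in D$, let a basic open neighbourhood $U\times V$ of $e(t_0,\phi_0)=(t_0,\phi_0(t_0))$ be given. Using openness of $\mathrm{dom}\,\phi_0$ and continuity of $\phi_0$ at $t_0$, I would choose $\delta>0$ small enough that $K:=[t_0-\delta,t_0+\delta]\subset U\cap \mathrm{dom}\,\phi_0$ and $\phi_0(K)\subset V$. Then $\phi_0\in W(K,V)$, and I claim that
\[
	(t_0-\delta,t_0+\delta)\times W(K,V)
\]
is mapped into $U\times V$ by $e$: if $(s,\phi)$ lies in this neighbourhood, then $s\in K\subset U$ and $s\in K\subset\mathrm{dom}\,\phi$ with $\phi(s)\in\phi(K)\subset V$, so $e(s,\phi)=(s,\phi(s))\in U\times V$. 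Equivalently, one could argue via the triangle inequality
\[
	d(\phi(s),\phi_0(t_0)) \le d(\phi(s),\phi_0(s))+d(\phi_0(s),\phi_0(t_0)),
\]
controlling the first summand by uniform convergence on $K$ and the second by continuity of $\phi_0$.

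The only mildly delicate point, and what I would treat as the main obstacle, is coordinating the choice of $\delta$ so that a single compact neighbourhood $K$ of $t_0$ simultaneously (i) lies in $\mathrm{dom}\,\phi_0$, (ii) is mapped into $V$ by $\phi_0$, and (iii) sits inside $U$; otherwise the subbasic set $W(K,V)$ would not be available as a handle. Once $K$ is chosen with these three properties, both the openness of $D$ and the continuity of $e|_D$ drop out immediately, and the proof is complete.
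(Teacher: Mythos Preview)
Your proof is correct and matches the paper's argument closely. The openness argument is identical to the paper's (choose a compact interval $K\subset\mathrm{dom}\,\phi_0$ around $t_0$ and use $\mathrm{int}\,K\times W(K,X)$); for continuity the paper argues sequentially via the triangle inequality $d(\phi_n(t_n),\phi(t))\le d(\phi_n(t_n),\phi(t_n))+d(\phi(t_n),\phi(t))$, which is exactly the alternative you sketch, whereas your primary argument via the compact-open neighbourhood $W(K,V)$ is a slightly more direct variant that avoids sequences but yields the same conclusion.
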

\begin{proof}
Let $\qty(t, \phi) \in \mathrm{dom}\, e.$ By definition, we have $t \in \mathrm{dom}\,\phi$. Therefore, we can find a compact neighborhood $K$ of $t$ such that $K \subset \mathrm{dom}\,\phi$. This is equivalent to $\phi \in W(K,X).$ As $t \in K,$ $\mathrm{int}\, K \times W(K,X) \subset \mathrm{dom}\,e.$ Therefore, $\mathrm{dom}\,e$ is open.

For the continuity of $e,$ we show that $\qty(t_n, \phi_n) \to\qty(t, \phi)$ implies $\qty(t_n, \phi_n(t_n) )\to \qty(t, \phi(t))$. Let $\delta >0$ be sufficiently small to satisfy $[t-\delta, t+\delta] \subset \mathrm{dom}\, \phi.$ For sufficiently large $n$, we have $[t-\delta, t+\delta] \subset \mathrm{dom}\, \phi_n.$ Consequently, as we have $t_n \to t,$ there exists $N$ such that $[t-\delta, t+\delta] \subset \mathrm{dom}\, \phi_n$ and $t_n \in [t-\delta, t+\delta] $ for all $n \geq N.$ Therefore, we have
\[
\begin{aligned}
		d\qty(\phi_n(t_n), \phi(t)) &\leq d\qty(\phi_n(t_n), \phi(t_n)) +d\qty(\phi(t_n), \phi(t))\\
							&\leq \sup_{s \in [t-\delta, t+\delta]} d\qty(\phi_n(s), \phi(s)) + d\qty(\phi(t_n), \phi(t))
\end{aligned}
\]
if $n \geq N.$
The first term on the right-hand side converges to $0$ as $n \to \infty$ because $\phi_n \to \phi.$ The second term tends to $0$ owing to the continuity of $\phi.$
\end{proof}

By analogy with solutions for an ODE, a natural object of study is the set of all maximally defined partial maps corresponding to non-continuable solutions.
\begin{definition}\label{def_cs}
A partial map $\phi\in C_p(\mathbb{R},X)$ with a nonempty connected domain is \textbf{maximally defined} if, for all $\psi\in C_p(\mathbb{R},X)$ with a nonempty connected domain, the condition $\mathrm{dom}\, \phi \subset \mathrm{dom}\, \psi$ and $\phi=\psi$ on $\mathrm{dom}\, \phi$ implies $\phi=\psi.$

The set of all maximally defined partial maps is denoted by $C_s(\mathbb{R}, X)$.
\end{definition}
The space $C_s(\mathbb{R}, X)$ enjoys properties better than $C_p(\mathbb{R}, X)$ \cite{suda2023equivalence}. For example, $C_s(\mathbb{R}, X)$ is an $T_1$ space. 
\subsection{Axiomatic theory of ordinary differential equations}
As mentioned in Introduction, there are two main formalisms of the axiomatic theory of ODE. Here, we introduce Yorke's theory based on the notion of partial maps with open domains. Details of this theory can be found in \cite{yorke1969spaces, suda2022equivalence}.

The basic idea of Yorke's axiomatic theory of ODE is that a sufficiently well-behaved subspace of the space of partial maps $C_p(\mathbb{R},X)$ can be considered as a solution space for an initial value problem on $X$. 

For example, let us consider an initial value problem for an ODE of the form $\dot x = f(x).$ If the initial condition is $x(0) = x_0,$ the problem here is to find a map $\phi$ in the space of all possible solutions $S =\{\psi \mid \dot \psi(t) = f(\psi(t)) \text{ for all } t \in \mathrm{dom}\, \psi\}$ such that $\phi(0) = x_0.$ In terms of the aforementioned evaluation map $e$, this is equivalent to finding $\phi \in \qty(e|_S)^{-1}(\{(0, x_0)\}),$ where $e|_S: \mathbb{R}\times S \to \mathbb{R}\times X$ is the restriction of $e$ to $\mathbb{R}\times S.$ If this initial value problem is well posed, the space of solutions $S$ has good properties, such as compactness.

In axiomatic treatment, axioms are imposed on the space of solutions.
The axioms are given in terms of the evaluation map as follows:
\begin{definition}\label{def_axioms}
Let $S \subset C_s(\mathbb{R},X)$ and $e_S: \mathbb{R}\times S \to \mathbb{R}\times X$ be the restrictions of $e$ to $\mathbb{R}\times S.$
\begin{enumerate}
	\item The subspace $S$ satisfies the \emph{compactness axiom} if $e_S$ is a proper map.
	\item The subspace $S$ satisfies the \emph{existence axiom} on $W$ if $e_S: e_S^{-1}(W)\to W$ is surjective.
	\item The subspace $S$ satisfies the \emph{uniqueness axiom} on $W$ if $e_S: e_S^{-1}(W)\to W$ is injective.
	\item The subspace $S$ has \emph{domain} $D$ if $e_S$ is defined on $D \times S.$
	
	\end{enumerate}
\end{definition}
\begin{remark}
The axiom statements given here are different from the original ones in \cite{yorke1969spaces}. In Yorke's presentation of axioms, the star construction $S^*W := \{(t, \phi) \mid t \in \mathrm{dom}\,\phi \text{ and } (t, \phi(t))\in W\}$ is used instead of the evaluation map. However, we have $S^*W = e_S^{-1}(W)$, and hence, we can easily see that they are equivalent.
\end{remark}
The compactness axiom plays a central role in the development of the theory as it corresponds to the continuous dependence. 
\begin{theorem}[Theorem 2.13 in \cite{suda2022equivalence}]\label{cptness}
A subset $S \subset C_s(\mathbb{R},X)$ satisfies the compactness axiom if and only if the following property holds: if $(t_n, x_n) \to (t, x)$ is $n \to \infty$ in $\mathbb{R} \times X$ and there exists a sequence of maps $\phi_n \in S$ with $\phi_n(t_n) = x_n,$ there is a map $\psi \in S$ with $\psi(t) = x$ and a subsequence $\{(t_{n_i}, \phi_{n_i})\}$ with $(t_{n_i}, \phi_{n_i}) \to (t, \psi)$ as $i \to \infty.$
\end{theorem}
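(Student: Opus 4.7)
The plan is to prove both directions by translating the preimage-of-compact-set condition that defines properness into sequential language. This is made possible by the second countability of $C_p(\mathbb{R},X)$ (noted just after its definition), which implies the same for $\mathbb{R}\times S$ and therefore equates compactness with sequential compactness on that space. The continuity of $e_S$ on its open domain, Lemma \ref{lem_eval}, is the other ingredient used throughout.

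For the ``$\Rightarrow$'' direction, assume $e_S$ is proper and fix a convergent sequence $(t_n,x_n)\to(t,x)$ together with $\phi_n\in S$ satisfying $\phi_n(t_n)=x_n$. The set $K:=\{(t_n,x_n)\mid n\in\mathbb{N}\}\cup\{(t,x)\}$ is compact in $\mathbb{R}\times X$, and since $e_S(t_n,\phi_n)=(t_n,x_n)\in K$, the sequence $(t_n,\phi_n)$ lies in the compact set $e_S^{-1}(K)$. Extracting a convergent subsequence $(t_{n_i},\phi_{n_i})\to(t',\psi)$ in $e_S^{-1}(K)$ and applying continuity of $e_S$ gives $(t',\psi(t'))=(t,x)$, so $t'=t$ and $\psi(t)=x$, as required.

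For the ``$\Leftarrow$'' direction, let $K\subset\mathbb{R}\times X$ be compact and pick any $(t_n,\phi_n)\in e_S^{-1}(K)$. After passing to a subsequence we may assume $(t_n,\phi_n(t_n))\to(t,x)\in K$. Setting $x_n:=\phi_n(t_n)$, the assumed sequential property produces a further subsequence $(t_{n_{i_j}},\phi_{n_{i_j}})\to(t,\psi)$ in $\mathbb{R}\times S$ with $\psi(t)=x$, and then $(t,\psi)\in e_S^{-1}(K)$ since $e_S(t,\psi)=(t,x)\in K$. Thus $e_S^{-1}(K)$ is sequentially compact, and second countability promotes this to compactness.

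The most delicate point is ensuring that the limits obtained above genuinely lie in the open set $\mathrm{dom}\,e_S\subset\mathbb{R}\times S$ and, in the forward direction, that the limiting $\psi$ actually belongs to $S$ rather than merely to $C_s(\mathbb{R},X)$. In the forward direction both issues are handled by noting that $e_S^{-1}(K)$ is compact hence closed in $\mathbb{R}\times S$, so subsequential limits of its points stay inside it; this, together with the characterization of convergence in $C_s(\mathbb{R},X)$, forces a neighborhood of $t'$ to be eventually contained in $\mathrm{dom}\,\phi_{n_i}$ and therefore in $\mathrm{dom}\,\psi$. In the reverse direction the conclusion $\psi(t)=x$ already presupposes $t\in\mathrm{dom}\,\psi$, so $(t,\psi)\in\mathrm{dom}\,e_S$ is automatic. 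With this care about domains, the argument reduces to the bookkeeping of subsequences sketched above.
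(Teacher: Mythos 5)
The paper does not actually prove Theorem \ref{cptness}; it imports the statement from the reference \cite{suda2022equivalence} without proof, so there is no internal argument to compare against. Judged on its own, your proof is correct and is the natural one: properness means preimages of compact sets are compact, and since $C_p(\mathbb{R},X)$ (hence $\mathbb{R}\times S$) is second countable, compactness and sequential compactness coincide, so both directions reduce to the subsequence extraction you describe, with the continuity of $e$ on its open domain (Lemma \ref{lem_eval}) identifying the limit $(t',\psi(t'))=(t,x)$. The one imprecision worth fixing is the phrase ``compact hence closed'' in your final paragraph: the paper only guarantees that $C_s(\mathbb{R},X)$ is $T_1$, not Hausdorff, so compact subsets need not be closed. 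This does not damage the argument, because the correct justification is more direct: in a first countable space, any sequence in a compact set has a cluster point \emph{in that set} and hence a subsequence converging to a point of the set, so the limit $(t',\psi)$ automatically lies in $e_S^{-1}(K)$, giving $\psi\in S$ and $t'\in\mathrm{dom}\,\psi$ without any appeal to closedness. With that substitution the proof is complete.
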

In applying these axioms to the study of dynamics, we use the shift map on $C_p(\mathbb{R}, X)$ to describe the dynamics.

\begin{definition}
The shift map $\sigma : \mathbb{R} \times C_p(\mathbb{R},X) \to C_p(\mathbb{R},X) $ is defined as
\[
	\sigma(t, \phi)(x) := \phi(x+t),
\]
where $x \in \mathrm{dom}(\phi)-t$.

\end{definition}
\begin{theorem}
  The shift map is continuous and satisfies the following conditions:
\begin{enumerate}
	\item For each $\phi\in C_p(\mathbb{R},X)$ we have $\sigma(0, \phi) = \phi.$
	\item For all $s,t \in \mathbb{R}$ and $\phi\in C_p(\mathbb{R},X)$, we have $\sigma(s,\sigma(t,\phi)) = \sigma(s+t, \phi).$
\end{enumerate}
That is, $\sigma$ is a continuous flow on $C_p(\mathbb{R},X)$.
\end{theorem}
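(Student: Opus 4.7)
The plan is to handle the two algebraic properties by direct unwinding of definitions and then reduce continuity of $\sigma$ to two compactness facts about $\phi_0$ on a slightly enlarged compact set.

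For the algebraic properties, observe that $\sigma(0,\phi)(x) = \phi(x+0) = \phi(x)$ on $\mathrm{dom}\,\phi - 0 = \mathrm{dom}\,\phi$, giving (1). For (2), both $\sigma(s,\sigma(t,\phi))$ and $\sigma(s+t,\phi)$ send $x$ to $\phi(x+s+t)$, and their domains are $(\mathrm{dom}\,\phi - t)-s$ and $\mathrm{dom}\,\phi - (s+t)$ respectively, which coincide. These steps are purely bookkeeping; the real content lies in continuity.

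For continuity I would use the uniform-convergence-on-compact-sets description of the topology on $C_p(\mathbb{R},X)$ recalled earlier in the paper. Given $(t_n,\phi_n)\to(t_0,\phi_0)$ and any compact $K\subset\mathrm{dom}\,\sigma(t_0,\phi_0) = \mathrm{dom}\,\phi_0 - t_0$, the plan is to produce a single compact ``buffer'' set $K' := K + t_0 + [-\delta,\delta]$ for $\delta>0$ chosen small enough that $K'\subset\mathrm{dom}\,\phi_0$ (possible because $K+t_0$ is compact and $\mathrm{dom}\,\phi_0$ is open). Once $\phi_n\to\phi_0$ gives $K'\subset\mathrm{dom}\,\phi_n$ for large $n$, the inclusion $K + t_n \subset K'$ (valid once $|t_n-t_0|<\delta$) yields $K\subset\mathrm{dom}\,\phi_n - t_n = \mathrm{dom}\,\sigma(t_n,\phi_n)$, so the shifted domains eventually contain $K$.

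It then remains to estimate $\sup_{x\in K} d(\phi_n(x+t_n),\phi_0(x+t_0))$. Splitting via the triangle inequality into
\[
 d(\phi_n(x+t_n),\phi_0(x+t_n)) + d(\phi_0(x+t_n),\phi_0(x+t_0)),
\]
the first term is dominated by $\sup_{y\in K'} d(\phi_n(y),\phi_0(y))$, which tends to $0$ by uniform convergence on $K'$; the second tends to $0$ uniformly in $x\in K$ by uniform continuity of $\phi_0$ on the compact set $K'$. I expect the main obstacle to be purely technical rather than conceptual: one must track carefully that the enlarged set $K'$ simultaneously (i) contains all the translated arguments $K+t_n$ for large $n$, and (ii) lies in $\mathrm{dom}\,\phi_n$ for large $n$, because the topology mixes a ``domain inclusion'' condition with uniform convergence. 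Choosing $\delta$ before invoking $\phi_n\to\phi_0$ and keeping the order of quantifiers straight is the only delicate point.
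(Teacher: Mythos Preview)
The paper does not actually supply a proof of this theorem; it is stated without argument, presumably because it is regarded as standard (or as established in the cited references \cite{yorke1969spaces, suda2022equivalence}). So there is no ``paper's proof'' to compare against.

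Your proposal is correct and complete. The algebraic identities are immediate, and your continuity argument is sound: enlarging $K+t_0$ to the buffer $K' = K+t_0+[-\delta,\delta]\subset\mathrm{dom}\,\phi_0$, then using $\phi_n\to\phi_0$ to get $K'\subset\mathrm{dom}\,\phi_n$ and $t_n\to t_0$ to get $K+t_n\subset K'$, is exactly the right bookkeeping. The triangle-inequality split into a uniform-convergence term on $K'$ and a uniform-continuity term for $\phi_0$ on $K'$ is the standard move and works because $X$ is metric. Your use of sequences is justified by the second countability of $C_p(\mathbb{R},X)$ noted earlier in the paper. In fact your argument closely mirrors the paper's proof of Lemma~\ref{lem_eval} (continuity of the evaluation map $e$), which uses the same buffer-interval idea and the same triangle-inequality split; you have simply adapted that template to the shift map, with the additional wrinkle of tracking the moving domain $\mathrm{dom}\,\phi_n - t_n$.
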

The next result shows how subbasis of $C_p(\mathbb{R}, X)$ is transformed by the shift map.
\begin{lemma}
For all compact subset $K \subset \mathbb{R}$, open subset $V \subset X$ and $t\in \mathbb{R}$, we have
\[
	\sigma\qty(t, W\qty(K, V) ) = W\qty(K-t, V).
\]
\end{lemma}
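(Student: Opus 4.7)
The plan is to prove both inclusions directly from the definitions, using the fact that the shift $\sigma(t,\cdot)$ has a continuous inverse $\sigma(-t,\cdot)$ on $C_p(\mathbb{R},X)$ (from the flow property stated in the preceding theorem), so it is a bijection of $C_p(\mathbb{R},X)$ onto itself. Then the equality $\sigma(t,W(K,V)) = W(K-t,V)$ reduces to checking that $\phi \in W(K,V)$ if and only if $\sigma(t,\phi)\in W(K-t,V)$.

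First I would unfold the definition of $\sigma$: for any $\phi \in C_p(\mathbb{R},X)$, the map $\sigma(t,\phi)$ has domain $\mathrm{dom}\,\phi - t$ and acts by $\sigma(t,\phi)(x) = \phi(x+t)$; in particular $\mathrm{im}\,\sigma(t,\phi) = \mathrm{im}\,\phi$. For the inclusion $\subset$, assume $\phi \in W(K,V)$, so $K\subset \mathrm{dom}\,\phi$ and $\mathrm{im}\,\phi\subset V$. Then $K-t \subset \mathrm{dom}\,\phi - t = \mathrm{dom}\,\sigma(t,\phi)$, while $\mathrm{im}\,\sigma(t,\phi) = \mathrm{im}\,\phi \subset V$, hence $\sigma(t,\phi)\in W(K-t,V)$.

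For the reverse inclusion $\supset$, let $\psi \in W(K-t,V)$ and set $\phi := \sigma(-t,\psi)$. By the same computation applied with $-t$ in place of $t$ and $K-t$ in place of $K$, we obtain $\phi \in W\bigl((K-t)-(-t),V\bigr) = W(K,V)$; and by the flow identity $\sigma(t,\phi) = \sigma(t,\sigma(-t,\psi)) = \sigma(0,\psi) = \psi$. Thus every element of $W(K-t,V)$ is of the form $\sigma(t,\phi)$ for some $\phi \in W(K,V)$, which establishes the equality.

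There is no real obstacle here; the only point to be careful about is that translation of the domain is by $-t$ (not $+t$), so that the compact witness set transforms from $K$ to $K-t$ as claimed, and that the image condition is automatically preserved because shifting in time leaves the image unchanged.
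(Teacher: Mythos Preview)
Your argument is correct and is precisely the ``direct calculation'' the paper alludes to: you unfold the definitions of $\sigma(t,\phi)$ and $W(K,V)$, observe that the domain shifts by $-t$ while the image is unchanged, and use the flow property $\sigma(-t,\cdot)$ as the inverse to get the reverse inclusion. There is nothing to add; this is the same approach, just written out in full.
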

\begin{proof}
This result follows from a direct calculation.
\end{proof}
For shift-invariant subsets of $C_{s}(\mathbb{R}, X)$, weaker conditions are sufficient to verify that the axioms hold globally. 
\begin{theorem}
Let $S \subset C_{s}(\mathbb{R}, X)$ be shift invariant. Then,
\begin{enumerate}
	\item $S$ satisfies the existence axiom on $\mathbb{R}\times X$ if and only if $S$ satisfies the existence axiom on $\{0\}\times X.$
	\item $S$ satisfies the uniqueness axiom on $\mathbb{R}\times X$ if and only if $S$ satisfies the uniqueness axiom on $\{0\}\times X.$
	\item $S$ has domain $\mathbb{R}$ if and only if $S$ has domain $\{0\}.$ 
\end{enumerate}
\end{theorem}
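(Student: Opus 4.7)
The plan is to exploit shift-invariance of $S$ together with the flow property $\sigma(s,\sigma(t,\phi)) = \sigma(s+t,\phi)$ to reduce any assertion at $(t,x) \in \mathbb{R}\times X$ to the corresponding assertion at $(0,\phi(t)) \in \{0\}\times X$ via the identity $\sigma(t,\phi)(0) = \phi(t)$. In all three parts the backward direction is immediate because $\{0\}\times X \subset \mathbb{R}\times X$, so the work is in the forward directions; and in each case the key is that $\sigma(-t,\cdot)$ inverts $\sigma(t,\cdot)$ together with $\sigma(t,\phi) \in S$ for every $\phi \in S$ and $t \in \mathbb{R}$.

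For existence, given $(t,x) \in \mathbb{R}\times X$, I would use the hypothesis at $\{0\}\times X$ to pick $\phi \in S$ with $\phi(0)=x$ and then set $\psi := \sigma(-t,\phi)$; shift-invariance places $\psi$ in $S$, and $\psi(t) = \phi(0) = x$ supplies the required preimage under $e_S$. For uniqueness, if $\phi_1, \phi_2 \in S$ share $t$ in their domains and satisfy $\phi_1(t) = \phi_2(t)$, I would shift by $t$ to see that $\sigma(t,\phi_1)$ and $\sigma(t,\phi_2)$ both lie in $S$ with $0$ in their domains and agree at $0$; the hypothesis forces $\sigma(t,\phi_1) = \sigma(t,\phi_2)$, and applying $\sigma(-t,\cdot)$ recovers $\phi_1 = \phi_2$. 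For the domain statement, if every $\phi \in S$ has $0 \in \mathrm{dom}\,\phi$, I would apply this to $\sigma(t,\phi) \in S$ and use $\mathrm{dom}\,\sigma(t,\phi) = \mathrm{dom}\,\phi - t$ to conclude $t \in \mathrm{dom}\,\phi$.

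There is no substantive obstacle; the argument is essentially bookkeeping about shifts. The only point requiring a line of care is that $\sigma(t,\phi)$ remains maximally defined whenever $\phi$ is, so that shifting keeps us inside $C_s(\mathbb{R},X)$; this is immediate since shifting merely translates the domain and maximal definedness is translation-invariant. With that in hand, the same one-line reduction handles all three parts uniformly.
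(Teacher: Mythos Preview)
Your proposal is correct and matches the paper's own proof essentially line for line: the paper also picks $\phi\in S$ with $\phi(0)=x$ and uses $\sigma(-s,\phi)\in S$ with $\sigma(-s,\phi)(s)=x$ for (1), declares (2) similar, and for (3) applies the domain-$\{0\}$ hypothesis to $\sigma(s,\phi)\in S$ to get $s\in\mathrm{dom}\,\phi$. Your extra remark that shifts preserve maximal definedness is a reasonable point of care that the paper leaves implicit.
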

\begin{proof}
First, we show statement (1).
Let $S$ satisfy the existence axiom on $\{0\}\times X$ and $(s, x) \in \mathbb{R}\times X.$ Then, we can find $\phi \in S$ with $\phi(0) = x.$ Since we have $\sigma(-s, \phi) \in S$ by shift invariance and $\sigma(-s, \phi)(s) = \phi(0) =x, $ $S$ satisfies the existence axiom on $\mathbb{R}\times X$. The converse is obvious. 

Proof of statement (2) is similar. 

Finally, we show statement (3). Let $S$ have domain $\{0\}$ and $s \in \mathbb{R}.$ By shift invariance,
$\sigma(s, \phi) \in S$. Therefore, $0 \in \mathrm{dom}\, \sigma(s, \phi)$. However, this is equivalent to $s \in \mathrm{dom}\, \phi.$ Therefore, $S$ has domain $\mathbb{R}$.
\end{proof}

In particular, we do not need to consider partial maps as long as we are concerned with shift-invariant sets having domain $\{0\}$.

\begin{corollary}
Let $S \subset C_{s}(\mathbb{R}, X)$ be shift-invariant and have domain $\{0\}$.
Then, $S \subset C(\mathbb{R}, X)$.\end{corollary}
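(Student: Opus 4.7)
The plan is to reduce immediately to part (3) of the preceding theorem. Since $S$ is shift-invariant and has domain $\{0\}$, that theorem gives that $S$ has domain $\mathbb{R}$, which by Definition \ref{def_axioms}(4) means $e_S$ is defined on all of $\mathbb{R}\times S$. Unpacking the definition of $e_S$, this says that for every $\phi \in S$ and every $t \in \mathbb{R}$, we have $t \in \mathrm{dom}\,\phi$, so $\mathrm{dom}\,\phi = \mathbb{R}$ for each $\phi \in S$.

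Next, I would invoke the fact that elements of $C_s(\mathbb{R},X)$ are partial maps with open domain, and by definition such a partial map is continuous on its domain. Combined with the conclusion that $\mathrm{dom}\,\phi = \mathbb{R}$, each $\phi \in S$ is a continuous map $\mathbb{R} \to X$, i.e., an element of $C(\mathbb{R},X)$. Taking the union over $\phi \in S$ yields $S \subset C(\mathbb{R},X)$.

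There is no real obstacle here: the entire content lies in the preceding theorem, and this corollary is just the observation that ``domain $\mathbb{R}$'' plus ``partial map with open domain'' collapses to ``ordinary continuous function on $\mathbb{R}$''. The only thing to be mildly careful about is to phrase the step from ``$e_S$ is defined on $\mathbb{R}\times S$'' to ``$\mathrm{dom}\,\phi = \mathbb{R}$ for all $\phi \in S$'' without ambiguity, by appealing directly to the definition of $e$.
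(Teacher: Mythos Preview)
Your proof is correct and is exactly the intended argument: the paper states this corollary without proof because it follows immediately from part (3) of the preceding theorem together with the definition of ``domain $D$'' in Definition~\ref{def_axioms}(4), precisely as you spell out. There is nothing to add.
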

From the results above, for a shift-invariant subset of $C_{s}(\mathbb{R}, X)$, we can say that $S$ satisfies the existence axiom on $X$ if $S$ satisfies the existence axiom on $\{0\}\times X.$ In what follows, we use this abbreviation for shift-invariant subsets of $C_{s}(\mathbb{R}, X)$.

Flows can be identified with well-behaved subspaces of $C(\mathbb{R},X).$

\begin{definition}
A subset $S \subset C(\mathbb{R}, X)$ is a \textbf{well-posed set} if $S$ is a shift-invariant set that satisfies the compactness, uniqueness, and existence axioms on $X$. \end{definition}

\begin{theorem}[Yorke]\label{yorke}
If $X$ is locally compact and $S$ is a well-posed set, then $(\sigma, S)$ and $(\Phi, X) $ are topologically conjugate, where the flow $\Phi: \mathbb{R}\times X \to X$ is defined as
\[
	\Phi(t, x) := \phi(t),
\]
where $\phi \in S$ satisfies $\phi(0) = x.$ Conversely, if $\Phi$ is a continuous flow, then set $S =\{\phi(-):=\Phi(-, x) \mid x \in X \}$ is a well-posed set.
\end{theorem}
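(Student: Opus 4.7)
The plan is to establish both directions through the evaluation-at-zero map $h:S\to X$ defined by $h(\phi):=\phi(0)$ and to verify that it provides the required conjugacy. For the forward direction, the existence and uniqueness axioms on $X$ immediately give that $h$ is a bijection: existence provides, for each $x\in X$, some $\phi\in S$ with $\phi(0)=x$, and uniqueness makes this $\phi$ unique. The target flow is then defined by $\Phi(t,x):=h^{-1}(x)(t)$. The flow axioms follow from shift invariance and uniqueness: $\sigma(t,\phi)\in S$ with $\sigma(t,\phi)(0)=\phi(t)=\Phi(t,x)$, so uniqueness forces $h^{-1}(\Phi(t,x))=\sigma(t,h^{-1}(x))$, and applying $h$ to both sides after another time shift yields $\Phi(s+t,x)=\Phi(s,\Phi(t,x))$.

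The critical step is showing that $h$ is a homeomorphism, since continuity of $\Phi$ then follows from $\Phi(t,x)=\mathrm{pr}_2\bigl(e(t,h^{-1}(x))\bigr)$ together with Lemma \ref{lem_eval}. Continuity of $h$ itself is a direct consequence of continuity of $e$. For continuity of $h^{-1}$, take $x_n\to x$ in $X$ and set $\phi_n:=h^{-1}(x_n)$, $\phi:=h^{-1}(x)$. Since $\phi_n(0)=x_n\to x$, Theorem \ref{cptness} applied to $(t_n,x_n)=(0,x_n)\to(0,x)$ yields a subsequence $\phi_{n_k}\to\psi$ in $S$ with $\psi(0)=x$, and uniqueness forces $\psi=\phi$. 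Because $C_p(\mathbb{R},X)$ is second countable and the limit $\phi$ is the unique possible accumulation point, the standard subsequence-of-subsequence argument upgrades this to $\phi_n\to\phi$. Local compactness of $X$ enters here to guarantee that the properness of $e_S$ delivers the sequential continuous-dependence of Theorem \ref{cptness} and that the compact-open topology on $S$ behaves well enough for the argument.

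For the converse direction, define $S:=\{\Phi(-,x)\mid x\in X\}$. Shift invariance is immediate from the group law: $\sigma(t,\Phi(-,x))=\Phi(\,\cdot+t,x)=\Phi(-,\Phi(t,x))\in S$. Existence on $\{0\}\times X$ holds since $\Phi(-,x)(0)=x$, and uniqueness follows from the fact that $\Phi(-,x)(0)=\Phi(-,y)(0)$ implies $x=y$ and hence the two trajectories coincide. For the compactness axiom, I verify the sequential criterion of Theorem \ref{cptness}: given $(t_n,x_n)\to(t,x)$ and $\phi_n=\Phi(-,y_n)$ with $\phi_n(t_n)=\Phi(t_n,y_n)=x_n$, the group law gives $y_n=\Phi(-t_n,x_n)$, which converges to $y:=\Phi(-t,x)$ by joint continuity of $\Phi$. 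Setting $\psi:=\Phi(-,y)\in S$, joint continuity of $\Phi$ on compact subsets of $\mathbb{R}\times X$ yields uniform convergence of $\Phi(-,y_n)$ to $\Phi(-,y)$ on each compact interval, i.e.\ $\phi_n\to\psi$ in $C_p(\mathbb{R},X)$, with $\psi(t)=x$ as required.

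The main obstacle is the continuity of $h^{-1}$ in the forward direction: passing from the subsequence convergence supplied by the compactness axiom to convergence of the full sequence. This relies on uniqueness of limits in $S$ (supplied by the uniqueness axiom via the identification of the limit as $\phi$) together with the second-countability of $C_p(\mathbb{R},X)$; local compactness of $X$ is what makes these topological ingredients mesh cleanly with the properness formulation of the compactness axiom.
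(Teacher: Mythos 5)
The paper does not prove this theorem; it is quoted as a known result of Yorke (with a proof available in \cite{yorke1969spaces} and \cite{suda2022equivalence}), so there is no in-paper argument to compare against. Your reconstruction is correct and is the standard one: the evaluation-at-zero map is a bijection by the existence and uniqueness axioms, its inverse is continuous by the sequential form of the compactness axiom (Theorem \ref{cptness}) plus the subsequence-of-subsequences trick, and the converse is a direct verification of the axioms for $\{\Phi(-,x)\}$. The only soft spot is your account of local compactness: as written, no step of your argument concretely invokes it (Theorem \ref{cptness} and Lemma \ref{lem_eval} are stated for separable metric $X$), so the closing paragraph attributing the hypothesis to ``making the topological ingredients mesh'' is a gesture rather than a use; this does not invalidate the proof, but you should either identify the precise step that needs it or note that your argument does not require it.
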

In Yorke's formalism, concatenating trajectories is not always possible, and this condition is formulated as an additional axiom.
\begin{definition}
A subset $S \subset C_{s}(\mathbb{R}, X)$ satisfies the \textbf{switching axiom} if $S$ contains map $\psi$ defined by \[
	\psi(t) = \begin{cases}
				\phi_1(t) &(t \leq \tau)\\
				\phi_2(t) &(t \geq \tau)
			\end{cases}
\]
whenever $\phi_1, \phi_2 \in S$ satisfy
\[
	\phi_1(\tau) =\phi_2(\tau)
\]
for some $\tau \in \mathrm{dom}\, \phi_1 \cap \mathrm{dom}\, \phi_2.$
\end{definition}
\begin{example}
For an ODE of the form $\dot{x} = f(x)$, where $f:\mathbb{R}^n \to \mathbb{R}^n$ is globally Lipschitz, the set of all solutions is a well-posed set. For a differential inclusion $\dot x \in F(x)$, the solution set satisfies the compactness axiom and existence axiom if $F$ is sufficiently well-behaved. This is a consequence of the compactness of the solution set (see Corollaries 4.4 and 4.5 in \cite{ smirnov2022introduction}).
\end{example}
\begin{example}\label{ex_fil}
Let us consider a Filippov system (for details, see \cite{filippov}) on $\mathbb{R}^2$ defined by a piecewise smooth vector field
\[
	Z(x,y) = 
\begin{cases}
	(0,-1) & y > 0\\
	(0,0) & y = 0\\
	(0,1) & y <0
\end{cases}
\]
The set of all solutions can be shown to satisfy the compactness axiom using Theorem \ref{cptness}.
\end{example}
\section{Topological properties of a solution space and the correspondence with dynamics}\label{solsp}
In this section, we consider how Yorke's axioms induce topological properties of a solution space and how the dynamics of the solution space, which are induced by the shift map, relate to the dynamics in the phase space.

\subsection{Topological properties of a solution space}
While Yorke's axioms describe the well-posedness of initial value problems, they have consequences for the topological properties of a solution space. These results are used in the following discussion of the dynamics.
\begin{theorem}\label{cl_thm}
If subset $S \subset C_{s}(\mathbb{R}, X)$ satisfies the compactness axiom, $S$ is closed.
\end{theorem}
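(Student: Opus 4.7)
The plan is to check sequential closure: given $\phi_n \in S$ with $\phi_n \to \phi$ in $C_s(\mathbb{R},X)$, I will show $\phi \in S$ by producing a subsequence that converges to some $\psi \in S$ and then identifying $\phi$ with $\psi$. The compactness axiom, in the sequential form of Theorem \ref{cptness}, is precisely the tool that extracts such a subsequence once one has convergence at a single point.

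Concretely, since $\phi \in C_s(\mathbb{R},X)$ has a nonempty connected domain, pick any $t_0 \in \mathrm{dom}\,\phi$. The compact-open definition of $C_p(\mathbb{R},X)$ gives $t_0 \in \mathrm{dom}\,\phi_n$ for $n$ large and $\phi_n(t_0) \to \phi(t_0)$ in $X$. Applying Theorem \ref{cptness} with the constant sequence $t_n \equiv t_0$ and $x_n := \phi_n(t_0)$ produces a map $\psi \in S$ with $\psi(t_0) = \phi(t_0)$ together with a subsequence $\phi_{n_i} \to \psi$ in $C_s(\mathbb{R},X)$.

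The remaining step, and the only genuinely nontrivial one, is to conclude $\phi = \psi$. This cannot come directly from a separation axiom on $C_s(\mathbb{R},X)$, because its compact-open topology need not force uniqueness of sequential limits with differing domains. Instead, I use that $\phi_{n_i}$ converges to both $\phi$ and $\psi$: pointwise evaluation on $\mathrm{dom}\,\phi \cap \mathrm{dom}\,\psi$ together with the Hausdorffness of $X$ shows $\phi = \psi$ on that intersection, which is nonempty since it contains $t_0$. Because $\mathrm{dom}\,\phi$ and $\mathrm{dom}\,\psi$ are intervals sharing $t_0$, gluing $\phi$ and $\psi$ gives a continuous common extension defined on the interval $\mathrm{dom}\,\phi \cup \mathrm{dom}\,\psi$, and the maximal definedness of each then forces $\mathrm{dom}\,\phi = \mathrm{dom}\,\psi$ and hence $\phi = \psi \in S$. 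The main obstacle is thus the bookkeeping around domains of maximally defined partial maps rather than any deep analytic difficulty; once maximality is exploited correctly, the compactness axiom does all the work.
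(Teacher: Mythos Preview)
Your proposal is correct and follows essentially the same route as the paper: fix a point $t_0\in\mathrm{dom}\,\phi$, use the compactness axiom to extract a subsequence $\phi_{n_i}\to\psi\in S$, then use Hausdorffness of $X$ and the maximal-definedness of $\phi,\psi$ to force $\phi=\psi$. The only cosmetic difference is that the paper invokes the compactness axiom directly via the compact set $\overline{\{(t_0,\phi_n(t_0))\}_n}$ and the properness of $e_S$, whereas you cite the sequential characterization of Theorem~\ref{cptness}; these are equivalent, and your more explicit treatment of the gluing/maximality step is if anything a slight expository improvement over the paper's one-line appeal to maximality.
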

\begin{proof}
We assume that $S$ is not closed. Then, we may take a sequence $\{\phi_n\}_n$ in $S$ such that $\phi_n$ converges to $\phi \not \in S.$ For a fixed $t \in \mathrm{dom} \phi,$ we have $t\in \mathrm{dom} \phi_n$ for sufficiently large $n$ and $\phi_n(t) \to \phi(t)$ in $X.$ Therefore, the set $\overline{ \{(t, \phi_n(t))\}_n }\subset \mathbb{R} \times X$ is compact, and 
\[
	(t, \phi_n) \in \qty(e_S)^{-1} \left( \overline{ \{(t, \phi_n(t))\}_n}\right).
\]
By the compactness axiom, we have $\phi_{n_{i}} \to \psi$ for some subsequence and $\psi \in S.$ By construction, we have $t \in \mathrm{dom} \psi \cap \mathrm{dom} \phi$. Furthermore, we have $\phi_{n_i}(s) \to \psi(s)$ for all $s \in \mathrm{dom} \psi.$ As $X$ is Hausdorff, it follows that $\phi$ and $\psi$ coincide on $\mathrm{dom} \psi \cap \mathrm{dom} \phi \neq \emptyset.$ By the maximality of $\phi$ and $\psi,$ we have $\phi = \psi,$ which contradicts the assumption $\phi \not \in S.$
\end{proof}

For the compactness of the solution space, we have the following analog of Barbashin's theorem, which concerns the compactness of the set of solutions of a set-valued process (see, for example, Proposition 2.3 in \cite{benaim2000ergodic} and Theorem 3 in \cite{CARABALLO2003692}):
\begin{theorem}\label{barbashin}
Let X be compact and $S \subset C_{s}(\mathbb{R}, X)$ satisfy the compactness axiom and have domain $\{0\}.$ Then, $S$ is compact.
\end{theorem}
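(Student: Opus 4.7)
The plan is to deduce compactness from sequential compactness, which is legitimate here because the paper already recorded that $C_p(\mathbb{R}, X)$ is second countable when $X$ is a separable metric space, and a second countable subspace that is sequentially compact is compact. So the whole task reduces to extracting, from an arbitrary sequence in $S$, a subsequence converging to a point of $S$.

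First, I would fix a sequence $\{\phi_n\}_n \subset S$. Since $S$ has domain $\{0\}$, every $\phi_n$ is defined at $0$, so the sequence $\{\phi_n(0)\}_n$ lies in $X$. Here I use the compactness of $X$ to pass to a subsequence $\{\phi_{n_k}\}_k$ with $\phi_{n_k}(0) \to x$ for some $x \in X$. Then $(0, \phi_{n_k}(0)) \to (0, x)$ in $\mathbb{R} \times X$.

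Next, I would apply Theorem \ref{cptness} (the sequential formulation of the compactness axiom) to the sequence $\{(t_k, x_k)\} = \{(0, \phi_{n_k}(0))\}$ together with the maps $\phi_{n_k}\in S$ satisfying $\phi_{n_k}(t_k) = x_k$. The theorem yields $\psi \in S$ with $\psi(0) = x$ and a further subsequence $\{(0, \phi_{n_{k_i}})\}_i$ with $(0, \phi_{n_{k_i}}) \to (0, \psi)$, which in particular gives $\phi_{n_{k_i}} \to \psi$ in $C_s(\mathbb{R}, X)$. Thus $S$ is sequentially compact.

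Finally, since $S$ inherits second countability from $C_p(\mathbb{R}, X)$, sequential compactness upgrades to compactness, completing the proof. I do not anticipate a significant obstacle: the compactness axiom, rephrased sequentially via Theorem \ref{cptness}, is tailored precisely to allow this kind of extraction, and having domain $\{0\}$ is exactly what ensures the initial evaluations lie in $X$. The only point that deserves a careful sentence is the passage from sequential compactness to compactness, and it is enough to cite the second countability already noted in Section \ref{prelim}. (One could alternatively observe that $S$ is closed by Theorem \ref{cl_thm} and argue inside a larger compact set, but the direct sequential argument is cleaner.)
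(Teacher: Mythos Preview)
Your argument is correct, but it follows a different route from the paper's proof. The paper works directly with the original definition of the compactness axiom (properness of $e_S$) rather than its sequential reformulation: it observes that, because $S$ has domain $\{0\}$, one has $\{0\}\times S = e_S^{-1}(\{0\}\times X)$; since $\{0\}\times X$ is compact and $e_S$ is proper, this preimage is compact, and $S$ is then its continuous image under the projection $\mathbb{R}\times C_s(\mathbb{R},X)\to C_s(\mathbb{R},X)$. This avoids any appeal to second countability or sequences. Your approach trades that topological one-liner for the explicit convergent-subsequence extraction via Theorem~\ref{cptness}, at the cost of an extra step to upgrade sequential compactness to compactness; the payoff is that your proof makes visible the mechanism (passing to a limit of solutions through a limit of initial values) that later arguments in the paper use repeatedly.
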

\begin{proof}
The space $\{0\}\times S = \qty(e_S)^{-1} \left(\{0\}\times X\right)$ is compact. Note that the projection of the map component $\mathrm{p}: \mathbb{R} \times C_{s}(\mathbb{R}, X) \to C_{s}(\mathbb{R}, X)$ is continuous. Because $S = \mathrm{p} \left(\{0\}\times S \right),$ $S$ is compact as it is a continuous image of a compact set.
\end{proof}
Conversely, the compactness of a solution space implies the compactness axiom.
\begin{theorem}\label{cpt_then_cptness}
If subset $S \subset C_{s}(\mathbb{R}, X)$ is compact, $S$ satisfies the compactness axiom.
\end{theorem}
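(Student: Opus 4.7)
The plan is to verify the sequential characterization of the compactness axiom supplied by Theorem \ref{cptness}. Thus I start with data $(t_n, x_n) \to (t, x)$ in $\mathbb{R}\times X$ and maps $\phi_n \in S$ with $\phi_n(t_n) = x_n$, and aim to produce $\psi \in S$ satisfying $\psi(t) = x$ together with a subsequence $(t_{n_i}, \phi_{n_i}) \to (t, \psi)$ in $\mathbb{R}\times S$.

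The first step is a routine extraction of a candidate limit. Since $C_p(\mathbb{R}, X)$ is second countable (recorded in the preliminaries), so is its subspace $S$, and hence compactness of $S$ coincides with sequential compactness. Passing to a subsequence I obtain $\phi_{n_i} \to \psi$ with $\psi \in S$; paired with $t_{n_i} \to t$, this already yields the convergence $(t_{n_i}, \phi_{n_i}) \to (t, \psi)$ in $\mathbb{R} \times S$.

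What remains is to verify both $t \in \mathrm{dom}\, \psi$ and $\psi(t) = x$. The evaluation is the easier half: once $t \in \mathrm{dom}\, \psi$ is available, Lemma \ref{lem_eval} gives continuity of $e$ at $(t, \psi)$, so $e(t_{n_i}, \phi_{n_i}) = (t_{n_i}, x_{n_i})$ must converge to $(t, \psi(t))$, and by Hausdorffness of $\mathbb{R}\times X$ this limit coincides with the assumed limit $(t, x)$, forcing $\psi(t) = x$.

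The main obstacle is thus proving $t \in \mathrm{dom}\, \psi$. My plan is a maximality contradiction: if $t \notin \mathrm{dom}\, \psi$, I would attempt to extend $\psi$ continuously to a strictly larger connected domain containing $t$ by setting $\tilde\psi(t) := x$, contradicting the maximal definedness of $\psi$. Continuity of $\tilde\psi$ at $t$ has to be extracted from the two pieces of information at hand, namely $\phi_{n_i}(t_{n_i}) \to x$ and the fact that $\phi_{n_i} \to \psi$ uniformly on compact subsets of $\mathrm{dom}\, \psi$; intuitively, for $s \in \mathrm{dom}\, \psi$ near $t$ one compares $\psi(s)$ with $\phi_{n_i}(s)$ (via uniform convergence) and then with $\phi_{n_i}(t_{n_i}) = x_{n_i}$ (via continuity of each $\phi_{n_i}$), hoping to transfer the limit. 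This continuity step is the genuinely substantive one and is where the full strength of the compactness hypothesis on $S$ has to be brought to bear.
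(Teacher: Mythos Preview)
Your route through Theorem~\ref{cptness} is not the paper's; the paper works directly with Definition~\ref{def_axioms}(1), taking a compact $W\subset\mathbb{R}\times X$ and a sequence $(t_n,\phi_n)$ in $e_S^{-1}(W)$, extracting $\phi_{n_i}\to\psi\in S$ by compactness of $S$, and then a convergent subsequence of $t_{n_i}$ from $(t_{n_i},\phi_{n_i}(t_{n_i}))\in W$. That is the entire argument.

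The obstacle you isolate --- whether $t\in\mathrm{dom}\,\psi$ --- is real, and your maximality plan does not resolve it. Two concrete gaps: you would need $t$ to be an \emph{endpoint} of the interval $\mathrm{dom}\,\psi$ (otherwise adjoining $\{t\}$ gives a disconnected domain and no contradiction with maximal definedness), and you would need $\psi(s)\to x$ as $s\to t$ from within $\mathrm{dom}\,\psi$; but uniform convergence $\phi_{n_i}\to\psi$ is available only on compacta inside $\mathrm{dom}\,\psi$, giving no boundary control, and there is no equicontinuity to pass from $\phi_{n_i}(s)$ to $\phi_{n_i}(t_{n_i})$. The paper's proof in fact elides the same point (it never checks that the subsequential limit lies in $e_S^{-1}(W)$), and at the stated generality the obstacle appears unremovable: with $X=[-1,1]$, $\psi(s)=\sin(1/s)$ on $(0,\infty)$ and $\phi_n(s)=\sin\!\bigl(1/(s+\tfrac{1}{n})\bigr)$ on $(-\tfrac{1}{n},\infty)$, the set $S=\{\phi_n:n\geq 1\}\cup\{\psi\}\subset C_s(\mathbb{R},X)$ is sequentially (hence, by second countability, genuinely) compact, yet $e_S^{-1}(\{0\}\times X)=\{(0,\phi_n):n\geq 1\}$ has its only accumulation point $(0,\psi)$ outside itself. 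The statement becomes unproblematic once $S\subset C(\mathbb{R},X)$, which is how it is actually used downstream in Corollary~\ref{cor_cpt}.
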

\begin{proof}
Let $W \subset \mathbb{R}\times X$ be compact. To show that $ \qty(e_S)^{-1}W$ is compact, let us consider a sequence $\{(t_n, \phi_n)\}_n \subset \qty(e_S)^{-1}W.$ Because $\{\phi_n\}_n \subset S$ and $S$ is compact, we may take a convergent subsequence $\{\phi_{n_i}\}_i.$ As $\{\qty(t_{n_i}, \phi_{n_i}\qty(t_{n_i}))\}_i \subset W,$ there exists a convergent subsequence of $\{t_{n_i}\}_i$. Therefore, $\{(t_n, \phi_n)\}_n$ has a convergent subsequence.
\end{proof}
Combining Theorems \ref{barbashin} and \ref{cpt_then_cptness}, we obtain the following characterization of compactness when the phase space is compact.
\begin{corollary}\label{cor_cpt}
If $X$ is a compact metric space, subset $S \subset C(X, \mathbb{R})$ is compact if and only if the compactness axiom is satisfied.
\end{corollary}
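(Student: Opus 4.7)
The plan is to prove this simply by combining Theorem \ref{barbashin} (compactness axiom plus domain $\{0\}$ plus compact phase space implies $S$ compact) with Theorem \ref{cpt_then_cptness} (compactness of $S$ implies the compactness axiom), so the corollary will reduce to checking the hypotheses of each direction.

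For the forward direction, assuming $S$ is compact, Theorem \ref{cpt_then_cptness} applies immediately in the required generality (it holds for any $S \subset C_s(\mathbb{R}, X)$) and yields the compactness axiom without using compactness of $X$.

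For the converse, assuming the compactness axiom holds, I want to invoke Theorem \ref{barbashin}. The hypothesis that $X$ is compact is given. The remaining point is that Theorem \ref{barbashin} requires $S$ to have domain $\{0\}$, and I need to extract this from the assumption $S \subset C(\mathbb{R}, X)$ in the statement (which, noting the apparent typo, I read as $S \subset C(\mathbb{R}, X)$, i.e., total continuous maps). Since every $\phi \in C(\mathbb{R}, X)$ has $\mathrm{dom}\, \phi = \mathbb{R}$, in particular $0 \in \mathrm{dom}\, \phi$, so $e_S$ is defined on $\{0\} \times S$, i.e., $S$ has domain $\{0\}$. Hence Theorem \ref{barbashin} applies and yields compactness of $S$.

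I do not expect a real obstacle here; the only subtle bookkeeping is in the backward direction where one must observe that the hypothesis $S \subset C(\mathbb{R}, X)$ (as opposed to $C_s(\mathbb{R}, X)$) precisely supplies the missing domain condition needed to apply Theorem \ref{barbashin}.
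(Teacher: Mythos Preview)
Your proposal is correct and matches the paper's own approach exactly: the paper states the corollary as an immediate combination of Theorems \ref{barbashin} and \ref{cpt_then_cptness}, and your additional remark that $S \subset C(\mathbb{R}, X)$ supplies the domain-$\{0\}$ hypothesis needed for Theorem \ref{barbashin} is the right (and only) bookkeeping point.
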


\subsection{Correspondence between solution space and dynamics}
Now, we consider how the dynamics in the solution space, which are induced by the shift map, correspond to those in the phase space.

First, we consider the basic dynamical properties.
In the axiomatic theory of ODE, we have a result similar to that of the Bebutov dynamical system (Theorem 1.20 and 2.20 in \cite{sibirskij1975introduction}). 
\begin{theorem}\label{bebutov}
Let $S$ be a well-posed set and $\Phi$ be the flow induced by $S$ via Theorem \ref{yorke}. Then, 
\begin{enumerate}
\item $\phi \in S$ is constant if and only if $\phi(0)$ is an equilibrium point of $\Phi$.
\item $\phi \in S$ is periodic if and only if $\phi(0)$ is a periodic point of $\Phi$.
\end{enumerate}
\end{theorem}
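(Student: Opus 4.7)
The plan is to exploit the topological conjugacy furnished by Theorem \ref{yorke}: since $S$ is well-posed, the evaluation map $\pi: S \to X$, $\pi(\phi) := \phi(0)$, is a homeomorphism satisfying $\pi(\sigma(t, \phi)) = \Phi(t, \pi(\phi))$ for all $t \in \mathbb{R}$ and $\phi \in S$. Under such a conjugacy, $\phi$ is a fixed (resp.\ periodic) point of the shift flow $\sigma$ if and only if $\phi(0)$ is an equilibrium (resp.\ periodic point) of $\Phi$. The theorem therefore reduces to identifying ``constant as a partial map'' with ``fixed by $\sigma$'' and ``periodic as a partial map'' with ``periodic under $\sigma$''.

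For part (1), I would observe that $\sigma(t, \phi) = \phi$ for all $t \in \mathbb{R}$ unfolds into $\phi(s+t) = \phi(s)$ for all $s, t$, which is precisely the statement that $\phi$ is constant on its domain $\mathbb{R}$. Combined with the conjugacy, this gives the required equivalence with $\phi(0)$ being an equilibrium of $\Phi$. Equivalently, one may argue directly: by construction $\Phi(t, \phi(0)) = \phi(t)$, so $\phi(t) = \phi(0)$ for all $t$ translates verbatim into $\Phi(t, \phi(0)) = \phi(0)$ for all $t$.

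Part (2) is parallel: $\phi$ is $T$-periodic iff $\phi(s+T) = \phi(s)$ for all $s$ iff $\sigma(T, \phi) = \phi$, which by the conjugacy is equivalent to $\Phi(T, \phi(0)) = \phi(0)$, i.e.\ $\phi(0)$ being $T$-periodic for $\Phi$. The only direction requiring care is the converse: once $\Phi(T, \phi(0)) = \phi(0)$, I evaluate $\sigma(T, \phi)$ at $0$ to get $\sigma(T,\phi)(0) = \phi(T) = \Phi(T,\phi(0)) = \phi(0)$, and then invoke the uniqueness axiom at $(0, \phi(0))$ to conclude $\sigma(T, \phi) = \phi$, which unfolds into $T$-periodicity of $\phi$. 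I do not anticipate any substantive obstacle; the proof is essentially a dictionary translation between trajectories in $S$ and orbits in $X$, with the uniqueness axiom supplying the single nontrivial step of promoting equality of initial values to equality of full trajectories.
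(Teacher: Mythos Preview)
Your argument is correct. The paper, however, does not supply its own proof of Theorem~\ref{bebutov}: it is stated as an analogue of classical facts about the Bebutov dynamical system, with a reference to Sibirsky's book, and then used to motivate the subsequent definition of equilibrium point. So there is nothing to compare against beyond noting that your route---reading off $\Phi(t,\phi(0))=\phi(t)$ from the definition of $\Phi$ in Theorem~\ref{yorke} and, for the nontrivial converse in (2), invoking the uniqueness axiom to upgrade $\sigma(T,\phi)(0)=\phi(0)$ to $\sigma(T,\phi)=\phi$---is exactly the expected one. One minor remark: you do not actually need the full topological conjugacy (and hence the local compactness hypothesis of Theorem~\ref{yorke}); the identity $\Phi(t,\phi(0))=\phi(t)$ together with shift invariance and the uniqueness axiom already suffice, as your ``direct'' version of the argument shows.
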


Therefore, we call $x \in X$ an \textbf{equilibrium point} of a shift-invariant set $S$ if there is a constant map $\phi \in S$ with $\phi(0) = x.$ Note that constant or periodic maps in $C_s(\mathbb{R}, X)$ have domain $\mathbb{R}$.
\begin{theorem}
Let $S\subset C_{s}(\mathbb{R}, X)$ be a shift-invariant closed set. Then, the set of equilibrium points of $S$ is closed.
\end{theorem}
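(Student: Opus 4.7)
The plan is to establish sequential closedness of the equilibrium set, which suffices because $C_p(\mathbb{R}, X)$ is second countable (and thus so is any subspace of $C_s(\mathbb{R}, X)$). Let $\{x_n\}$ be a sequence of equilibrium points of $S$ with $x_n \to x$ in $X$, and I want to produce a constant map $\phi \in S$ with $\phi(0) = x$.

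For each $n$, by hypothesis there is a constant map $\phi_n \in S$ with $\phi_n(0) = x_n$; by the remark preceding the statement, $\mathrm{dom}\,\phi_n = \mathbb{R}$, so $\phi_n \equiv x_n$ on $\mathbb{R}$. Let $\phi:\mathbb{R}\to X$ be the constant map $\phi \equiv x$; this is continuous and defined on all of $\mathbb{R}$, hence maximally defined, so $\phi \in C_s(\mathbb{R}, X)$. The key step is to verify that $\phi_n \to \phi$ in $C_p(\mathbb{R}, X)$, which is immediate from the definition of the topology: for every compact $K \subset \mathrm{dom}\,\phi = \mathbb{R}$ we have $K \subset \mathrm{dom}\,\phi_n = \mathbb{R}$ for all $n$, and
\[
\sup_{t\in K} d\bigl(\phi_n(t), \phi(t)\bigr) \;=\; d(x_n, x) \;\longrightarrow\; 0.
\]
Since $S$ is closed and $\phi_n \in S$, we conclude $\phi \in S$. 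As $\phi$ is constant with $\phi(0) = x$, the point $x$ is an equilibrium point of $S$.

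There is no serious obstacle here; the only thing to be careful about is that the definition of equilibrium point requires witnesses in $S \subset C_s(\mathbb{R}, X)$, so I must confirm both that the $\phi_n$ are automatically globally defined (handled by the remark) and that the candidate limit $\phi$ is maximally defined (handled by its having domain $\mathbb{R}$). No use of the compactness or any other Yorke axiom is required; closedness and shift-invariance are more than enough, and in fact the argument does not even use shift-invariance.
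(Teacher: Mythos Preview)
Your argument is correct and is essentially identical to the paper's proof, only spelled out in greater detail; the paper likewise takes a convergent sequence of equilibria $x_n\to x$, observes that the corresponding constant maps $\phi_n$ converge to the constant map $\phi\equiv x$, and uses closedness of $S$ to conclude $\phi\in S$. One small slip: your justification for using sequences cites second countability of $C_p(\mathbb{R},X)$, but the equilibrium set lives in $X$, and what actually makes sequential closedness sufficient is simply that $X$ is a metric space.
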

\begin{proof}
Let $\{x_n\}_{n}$ be a sequence of equilibrium points of $S$ converging to $x.$ If we let $\phi$ be a constant map with $\phi(0) = x,$ it is clear that $\phi_n \to \phi,$ where $\phi_n$ are constant maps with $\phi_n(0) = x_n.$ Therefore, $\phi \in S,$ and $x$ is an equilibrium point of $S$.
\end{proof}

Now, we consider general types of invariant sets.
There are two types of invariance for dynamical systems without uniqueness of trajectories. In the context of the axiomatic theory of ODE, these are formulated as follows:
\begin{definition}[Weak and strong invariance]
Let $S\subset C_{s}(\mathbb{R}, X)$ be a shift-invariant set. For a subset $A \subset X,$
\begin{enumerate}
	\item $A$ is \textbf{weakly invariant} if, for all $x \in A,$ there exists $\phi \in S$ such that $\phi(0) = x$ and $\mathrm{im}\, \phi \subset A.$
	\item $A$ is \textbf{strongly invariant} if $\mathrm{im}\, \phi \subset A$ for all $\phi \in S$ with $\phi(0) \in A.$

\end{enumerate}
\end{definition}
\begin{remark}
By definition, strong invariance implies weak invariance. If $A$ is weakly invariant, the existence axiom is satisfied on $\{0\}\times A.$ If the uniqueness axiom is satisfied, these notions coincide.
\end{remark}

Although we defined the invariance in terms of the evaluation at $0$, this is not essential, as is observable in the next result.
\begin{lemma}\label{lem_inv}
Let $S\subset C_{s}(\mathbb{R}, X)$ be a shift-invariant set. For a subset $A \subset X,$
\begin{enumerate}
	\item $A$ is weakly invariant if and only if, for all $x \in A,$ there exists $\phi \in S$ such that $x \in \mathrm{im}\, \phi \subset A.$
	\item $A$ is strongly invariant if and only if $\mathrm{im}\, \phi \cap A \neq \emptyset$ implies $\mathrm{im}\, \phi \subset A$ for all $\phi \in S$.

\end{enumerate}
\end{lemma}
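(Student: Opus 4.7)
The plan for both parts is to use shift invariance of $S$ to move the chosen ``witness time'' to $0$, and to observe that the shift map preserves the image as a set. Specifically, for any $\phi \in S$ and any $\tau \in \mathrm{dom}\,\phi$, the shifted map $\sigma(\tau,\phi)$ lies in $S$, satisfies $\sigma(\tau,\phi)(0) = \phi(\tau)$, and has domain $\mathrm{dom}\,\phi - \tau$, so its image equals $\{\phi(s+\tau) : s \in \mathrm{dom}\,\phi - \tau\} = \mathrm{im}\,\phi$. This single observation will drive everything.

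For statement (1), the forward implication is immediate: if $A$ is weakly invariant and $x \in A$, the $\phi \in S$ provided by the definition with $\phi(0) = x$ and $\mathrm{im}\,\phi \subset A$ certainly satisfies $x \in \mathrm{im}\,\phi \subset A$. For the converse, given $x \in A$, take $\phi \in S$ with $x \in \mathrm{im}\,\phi \subset A$ and choose $\tau \in \mathrm{dom}\,\phi$ with $\phi(\tau) = x$. Then $\psi := \sigma(\tau,\phi) \in S$ by shift invariance, $\psi(0) = x$, and $\mathrm{im}\,\psi = \mathrm{im}\,\phi \subset A$, giving weak invariance.

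For statement (2), the backward direction is straightforward: if the condition holds and $\phi \in S$ satisfies $\phi(0) \in A$, then $\mathrm{im}\,\phi \cap A \neq \emptyset$, hence $\mathrm{im}\,\phi \subset A$. For the forward direction, suppose $A$ is strongly invariant and $\phi \in S$ satisfies $\mathrm{im}\,\phi \cap A \neq \emptyset$; pick $\tau \in \mathrm{dom}\,\phi$ with $\phi(\tau) \in A$, set $\psi := \sigma(\tau,\phi) \in S$, and note $\psi(0) = \phi(\tau) \in A$. Strong invariance then gives $\mathrm{im}\,\psi \subset A$, and since $\mathrm{im}\,\psi = \mathrm{im}\,\phi$ we are done.

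There is no serious obstacle here; the only point that requires care is verifying that $\mathrm{im}\,\sigma(\tau,\phi) = \mathrm{im}\,\phi$, which is a direct computation from the definition of the shift. The lemma essentially records the fact that the ``evaluation at $0$'' in the definitions of weak and strong invariance is a matter of convention made possible by shift invariance of $S$.
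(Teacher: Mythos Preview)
Your proof is correct and essentially identical to the paper's: both arguments shift by the witness time $\tau$ to move the evaluation point to $0$ and use $\mathrm{im}\,\sigma(\tau,\phi)=\mathrm{im}\,\phi$. The only difference is cosmetic---you spell out the ``trivial'' directions and the image-preservation identity that the paper leaves implicit.
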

\begin{proof}
(1): The ``only if'' part is trivial. Let $A \subset X$ be such that for all $x \in A,$ there exists $\phi \in S$ with $x \in \mathrm{im}\, \phi \subset A.$ For each $x \in A$, we take $\phi \in S$ in the assumption and let $t \in \mathrm{dom}\, \phi$ such that $x = \phi(t).$ By the shift-invariance of $S$, we have $\psi := \sigma(t, \phi) \in S.$ The map $\psi$ satisfies $\psi
(0) = x$ and $\mathrm{im}\, \psi \subset A $.

(2): The ``if'' part is trivial. Let $A$ be strongly invariant and $\mathrm{im}\, \phi \cap A \neq \emptyset$ for a map $\phi \in S.$ By considering $\psi := \sigma(t, \phi)$, where $\psi(0) = \phi(t) \in \mathrm{im}\, \phi \cap A,$ we obtain $\mathrm{im}\, \psi \subset A$, which is equivalent to $\mathrm{im}\, \phi \subset A.$
\end{proof}
Weak invariance can be used to establish the existence of globally defined solutions.
\begin{theorem}\label{thm_glob}
Let $S\subset C_{s}(\mathbb{R}, X)$ be a shift-invariant set satisfying the compactness axiom and $A \subset X$ be a weakly invariant subset. If $\bar A$ is compact, there exists a shift-invariant subset $S' \subset S \cap C\qty(\mathbb{R}, X) $, such that $\phi(0) \in A$ for all $\phi \in S'.$
\end{theorem}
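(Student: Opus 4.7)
The plan is to define $S' := \{\phi \in S \cap C(\mathbb{R}, X) : \mathrm{im}\,\phi \subset A\}$ and show that for every $x \in A$ this set contains some $\phi$ with $\phi(0) = x$, which is evidently the intended content of the statement (the empty set alone would satisfy the literal conclusion vacuously). Shift invariance of $S'$ will be immediate from that of $S$, since for $\phi \in S'$ and $t \in \mathbb{R}$ the shift $\sigma(t,\phi)$ lies in $S$, has domain $\mathbb{R}$, and has image equal to $\mathrm{im}\,\phi \subset A$; the condition $\phi(0) \in A$ then follows from $\phi(0) \in \mathrm{im}\,\phi \subset A$.

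The core of the argument will be a trajectory-extension step. Given $x \in A$, weak invariance of $A$ furnishes $\phi \in S$ with $\phi(0) = x$ and $\mathrm{im}\,\phi \subset A$; I will show $\mathrm{dom}\,\phi = \mathbb{R}$. Writing $\mathrm{dom}\,\phi = (a, b)$, I would suppose for contradiction that $b < \infty$ and pick $t_n \to b^-$ inside $(a, b)$. Since $\phi(t_n) \in A \subset \bar A$ and $\bar A$ is compact, after passing to a subsequence one has $\phi(t_n) \to y$ for some $y \in \bar A$. Applying Theorem \ref{cptness} to the constant sequence $\phi_n := \phi$ with $\phi_n(t_n) = \phi(t_n)$ would then yield some $\psi \in S$ with $\psi(b) = y$ together with a further subsequence satisfying $(t_{n_i}, \phi) \to (b, \psi)$ in $\mathbb{R} \times S$. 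Because $C_s(\mathbb{R}, X)$ is $T_1$, a constant sequence in it can only converge to the constant value, so $\psi = \phi$ as partial maps. But then $b \in \mathrm{dom}\,\psi = \mathrm{dom}\,\phi = (a, b)$, a contradiction. A symmetric argument will rule out $a > -\infty$, so $\phi \in C(\mathbb{R}, X)$ and hence $\phi \in S'$.

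The chief obstacle will be this extension step, specifically the identification of the limit $\psi$ produced by the compactness axiom with the original $\phi$. The axiom alone only guarantees \emph{some} $\psi \in S$ passing through $(b, y)$; the trick is that the approximating sequence is constant in the $S$-coordinate, so the $T_1$ property of $C_s(\mathbb{R}, X)$ pins the limit down to $\phi$ itself and delivers the contradiction $b \in \mathrm{dom}\,\phi$. Once that is in place, non-emptiness of $S'$ above each point of $A$, its inclusion in $S \cap C(\mathbb{R}, X)$, shift invariance, and the target condition $\phi(0) \in A$ reduce to routine bookkeeping.
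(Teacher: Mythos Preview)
Your proposal is correct and follows essentially the same route as the paper: obtain $\phi$ with $\mathrm{im}\,\phi\subset A$ from weak invariance, show $\mathrm{dom}\,\phi=\mathbb{R}$ via the compactness axiom, and collect such maps into a shift-invariant $S'$. The only difference is cosmetic: the paper outsources the extension step (your contradiction argument using $T_1$) to Theorem~2.17 of \cite{suda2022equivalence}, and it defines $S'$ as the set of shifts of one chosen $\phi_x$ per $x\in A$ rather than your (slightly larger, choice-free) set $\{\phi\in S\cap C(\mathbb{R},X):\mathrm{im}\,\phi\subset A\}$.
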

\begin{proof}
For each $x\in A,$ there exists $\phi_x \in S$ with $\phi_x(0) =x$ and $\mathrm{im}\, \phi_x \subset A.$ From Theorem 2.17 in \cite{suda2022equivalence}, we have $\mathrm{dom}\, \phi_x = \mathbb{R}.$

Now, we consider the set
\[
	S' := \bigcup_{t \in \mathbb{R}} \{\sigma\qty(t,\phi_x) \mid x \in A\}.
\]
By definition, $S' $ is a shift-invariant subset of $S$ and $\phi(0) \in A$ for all $\phi \in S'.$ 
\end{proof}

The notion of invariance can be described in terms of the corresponding sets in the solution space. In particular, for a shift-invariant $S\subset C_s(\mathbb{R}, X)$ where $S$ satisfies the compactness axiom, we can establish a one-to-one correspondence between a class of compact invariant subsets of $S$ and compact weakly invariant subsets of $X$.

To state this result, we introduce several concepts and results. We define the evaluation-at-$0$ map \[\pi: C_s(\mathbb{R},X) \to X\] by $\pi(\phi) = \phi(0).$ By an argument similar to Lemma \ref{lem_eval}, we can show that $\pi$ is a partial map with open domain. Alternatively, we can verify this assertion by observing that $\pi$ is a composition of an injection $C_s(\mathbb{R},X) \to \mathbb{R} \times C_s(\mathbb{R},X),$ $e$ and a projection $\mathbb{R}\times X \to X.$
 
\begin{definition}
Let $S \subset C_s(\mathbb{R},X)$ and $A \subset X$. The \textbf{$A$-section of $S$} is defined by
\[
	S_A := \pi^{-1}(A) \cap S.
\] 
\end{definition}
We can say that $A$-section is the set in the space $S$ which corresponds to $A,$ and helps describe the dynamics in the phase space. 
\begin{remark}
By definition, $\phi \in S_A$ implies that $0 \in \mathrm{dom}\, \phi$. This observation has the following consequence.
In general, invariant subset $S$ of $C_s(\mathbb{R},X)$ may consist only of partial maps. However, if $S_A$ contains an invariant set $S',$ then $S' \subset C(\mathbb{R}, X).$
\end{remark}
We say an invariant subset $S' \subset S$ is \textbf{maximal with respect to phase space} if $\pi(S') = \pi(S'')$ implies $S'' \subset S'$ for all invariant subset $S'' \subset S$. If an invariant subset is maximal with respect to phase space, it is the largest among the invariant subsets occupying the same phase space.

\begin{theorem}[Main Theorem A]\label{thm_cor}
Let $S\subset C_s(\mathbb{R}, X)$ be a shift-invariant set that satisfies the compactness axiom. 
Then, there exists a one-to-one correspondence between the sets
\[
  \{S' \subset S \mid \text{shift invariant, maximal w.r.t. phase space and }\pi(S')\text{ is compact} \}
\]
and
\[
	\{A\subset X\mid A \text{ is weakly invariant and compact} \}.
\]
\end{theorem}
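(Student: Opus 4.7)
The plan is to construct explicit maps in both directions and show they are mutually inverse. Send a shift-invariant $S'$ maximal w.r.t.\ phase space to $\Psi(S') := \pi(S')$, and send a compact weakly invariant $A \subset X$ to
\[
    \Phi(A) := \{\phi \in S \mid \mathrm{im}\,\phi \subset A\}.
\]
The key observation driving the whole argument is that if $S'' \subset S$ is shift-invariant, then $\phi(t) \in \pi(S'')$ for every $\phi \in S''$ and every $t \in \mathrm{dom}\,\phi$; indeed, shift-invariance gives $\sigma(t,\phi) \in S''$, and $\pi(\sigma(t,\phi)) = \phi(t)$. Consequently $\mathrm{im}\,\phi \subset \pi(S'')$ for every $\phi \in S''$, i.e.\ $S'' \subset \Phi(\pi(S''))$. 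In particular, $\Psi(S')$ is weakly invariant (and compact by hypothesis), so $\Psi$ lands in the target class.

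Next I would verify that $\Phi(A)$ lies in the source class. Shift-invariance of $\Phi(A)$ is immediate since $\mathrm{im}\,\sigma(t,\phi) \subset \mathrm{im}\,\phi$. The inclusion $\pi(\Phi(A)) \subset A$ is trivial. For the reverse, I would invoke Theorem \ref{thm_glob} to produce, for every $x \in A$, a map $\phi_x \in S \cap C(\mathbb{R},X)$ with $\phi_x(0) = x$ and $\mathrm{im}\,\phi_x \subset A$; then $\phi_x \in \Phi(A)$ and $\pi(\phi_x) = x$, so $\pi(\Phi(A)) = A$ is compact. Maximality w.r.t.\ phase space of $\Phi(A)$ then follows from the general observation: any shift-invariant $S'' \subset S$ with $\pi(S'') = A$ satisfies $S'' \subset \Phi(\pi(S'')) = \Phi(A)$.

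For the two compositions, $\Psi(\Phi(A)) = \pi(\Phi(A)) = A$ by the identity just established. For $\Phi(\Psi(S')) = \Phi(\pi(S'))$, the general observation yields $S' \subset \Phi(\pi(S'))$, and since $\Phi(\pi(S'))$ is a shift-invariant subset of $S$ with $\pi(\Phi(\pi(S'))) = \pi(S')$, the maximality of $S'$ forces the reverse inclusion.

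The main obstacle is the inclusion $A \subset \pi(\Phi(A))$: weak invariance of $A$ by itself supplies, at each $x$, only a partial map $\phi$ with $\phi(0) = x$ and $\mathrm{im}\,\phi \subset A$. Obtaining an element of $\Phi(A)$ that projects to $x$, and in particular getting the maximality argument off the ground, requires extending $\phi$ globally in $\mathbb{R}$ while keeping its image inside $A$. That extension uses both the compactness of $A$ and the compactness axiom, and is precisely the content of Theorem \ref{thm_glob}; without it, $\pi(\Phi(A))$ could be a proper subset of $A$ and the correspondence would fail.
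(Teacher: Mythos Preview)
Your proof is correct and takes essentially the same approach as the paper. Your explicit inverse $\Phi(A) = \{\phi \in S \mid \mathrm{im}\,\phi \subset A\}$ coincides with the paper's $\mathrm{inv}\,S_A$ (the maximal shift-invariant subset of the $A$-section), and the three ingredients you isolate---the observation that $\mathrm{im}\,\phi \subset \pi(S'')$ for shift-invariant $S''$ (the paper's Lemma~\ref{lem_winv}), the appeal to Theorem~\ref{thm_glob} for the inclusion $A \subset \pi(\Phi(A))$, and the use of maximality for injectivity---match the paper's Lemmas~\ref{lem_winv}, \ref{lem_inv1}, \ref{lem_inv2} exactly; the only cosmetic difference is that the paper argues bijectivity of a single map while you exhibit mutual inverses.
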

To prove this theorem, we prepare several lemmas.
\begin{lemma}\label{lem_winv}
Let $S\subset C_{s}(\mathbb{R}, X)$ be a shift-invariant set. If $S' \subset S$ is shift-invariant, $\pi(S')$ is weakly invariant with respect to $S.$ 
\end{lemma}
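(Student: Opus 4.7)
The plan is to verify the definition of weak invariance directly by unpacking it: given any point $x$ in $\pi(S')$, I must exhibit a map $\phi \in S$ with $\phi(0)=x$ and $\mathrm{im}\,\phi \subset \pi(S')$. The only inputs available are the shift-invariance of $S'$ and the inclusion $S' \subset S$.

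First, I would fix $x \in \pi(S')$ and pull back a witness $\phi \in S'$ with $\phi(0)=x$. Since $S' \subset S$, this $\phi$ already qualifies as a candidate map from the ambient set $S$ demanded by the definition of weak invariance. The only nontrivial step is then to show $\mathrm{im}\,\phi \subset \pi(S')$.

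For this, take any $y \in \mathrm{im}\,\phi$ and write $y = \phi(t)$ with $t \in \mathrm{dom}\,\phi$. Shift-invariance of $S'$ yields $\sigma(t,\phi) \in S'$, and evaluating at $0$ gives $\pi(\sigma(t,\phi)) = \phi(t) = y$, hence $y \in \pi(S')$. Applying Lemma \ref{lem_inv}(1) is unnecessary since the map $\phi$ we produced already satisfies $\phi(0)=x$ directly.

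There is no real obstacle here: the statement is essentially the observation that shift-invariance of $S'$ in the function space is precisely what is needed to produce, from one trajectory through $x$, enough trajectories through every point of $\mathrm{im}\,\phi$ to keep the image inside $\pi(S')$.
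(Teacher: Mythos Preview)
Your proof is correct and follows essentially the same approach as the paper: pick $\phi \in S'$ with $\phi(0)=x$, then use shift-invariance of $S'$ to conclude $\phi(t) = \pi(\sigma(t,\phi)) \in \pi(S')$ for every $t \in \mathrm{dom}\,\phi$. The paper writes this as a single chain of inclusions rather than fixing a point $y \in \mathrm{im}\,\phi$, but the argument is identical.
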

\begin{proof}
Let $x \in \pi(S').$ Then, there exists $\phi \in S' \subset S$ with $\phi(0) = x.$ Because $S'$ is shift-invariant, we have
\[
	\phi(t) = \pi\qty(\sigma(t, \phi)) \in \pi\qty(\sigma(t, S')) = \pi(S')
\] 
for all $ t \in \mathrm{dom}\, \phi.$ Therefore, $\pi(S')$ is weakly invariant with respect to $S$.
\end{proof}

If $S$ and $S' \subset S$ are shift invariant, each $A \subset S$ with $S' \subset A$ has the maximal shift invariant subset. We use this property in the next lemma.

\begin{lemma}\label{lem_inv1}
Let $S\subset C_s(\mathbb{R}, X)$ be a shift-invariant set that satisfies the compactness axiom. A compact subset $A \subset X$ is weakly invariant with respect to $S$ if and only if $\pi\qty(\mathrm{inv}\,S_A) = A,$ where $\mathrm{inv}\,S_A$ is the maximal shift invariant subset of $S_A.$
\end{lemma}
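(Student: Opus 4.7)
The plan is to prove both directions of the equivalence separately. For the ``if'' direction I would rely entirely on Lemma \ref{lem_winv}, and for the ``only if'' direction I would reuse the explicit construction already carried out in the proof of Theorem \ref{thm_glob}, noting that it produces a shift-invariant witness living inside $S_A$.

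First I would dispatch the ``if'' direction: by definition $\mathrm{inv}\, S_A$ is a shift-invariant subset of $S$, so Lemma \ref{lem_winv} yields that $\pi(\mathrm{inv}\, S_A)$ is weakly invariant with respect to $S$, and the hypothesis $\pi(\mathrm{inv}\, S_A) = A$ finishes this half. For the ``only if'' direction, I would assume $A$ is compact and weakly invariant and, for each $x \in A$, select $\phi_x \in S$ with $\phi_x(0) = x$ and $\mathrm{im}\, \phi_x \subset A$. As in the proof of Theorem \ref{thm_glob}, compactness of $A$ together with the compactness axiom forces $\mathrm{dom}\, \phi_x = \mathbb{R}$, so the set
\[
	S' := \bigcup_{t \in \mathbb{R}} \{\, \sigma(t, \phi_x) \mid x \in A \,\}
\]
is a shift-invariant subset of $S$. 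Because $\sigma(t, \phi_x)(0) = \phi_x(t) \in A$, we have $S' \subset S_A$, and maximality then gives $S' \subset \mathrm{inv}\, S_A$. Taking $t = 0$ yields $x = \pi(\phi_x) \in \pi(S')$ for every $x \in A$, so $A \subset \pi(\mathrm{inv}\, S_A)$; the reverse inclusion $\pi(\mathrm{inv}\, S_A) \subset A$ is immediate from $\mathrm{inv}\, S_A \subset S_A \subset \pi^{-1}(A)$.

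The only genuinely delicate point, which I would flag as the main obstacle, is the globalization of the witnessing trajectories $\phi_x$: without $\mathrm{dom}\, \phi_x = \mathbb{R}$, the shift orbit $\{\sigma(t, \phi_x) \mid t \in \mathbb{R}\}$ would in general leave $S_A$ (since $0$ could drop out of the domain after shifting), and the candidate $S'$ would not actually sit inside $S_A$. This is precisely where the compactness axiom and the compactness of $A$ are indispensable; once globalization is secured through Theorem \ref{thm_glob}, the remainder of the argument is bookkeeping with the definitions of $\pi$, $S_A$, and $\mathrm{inv}\, S_A$.
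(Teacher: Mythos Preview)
Your proposal is correct and follows essentially the same route as the paper: both directions use exactly the same ingredients, namely Lemma \ref{lem_winv} for the ``if'' direction and the shift-invariant set $S'$ built in Theorem \ref{thm_glob} for the ``only if'' direction, followed by the maximality of $\mathrm{inv}\,S_A$ and the trivial inclusion $\pi(\mathrm{inv}\,S_A)\subset\pi(S_A)\subset A$. The only cosmetic difference is that you unpack the construction of $S'$ explicitly rather than citing Theorem \ref{thm_glob} as a black box; this is arguably cleaner, since the paper actually needs $\pi(S') = A$ while the \emph{statement} of Theorem \ref{thm_glob} only asserts $\pi(S')\subset A$, so one must look into its proof anyway.
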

\begin{proof}
Let $A \subset X$ be weakly invariant with respect to $S$. By Theorem \ref{thm_glob}, there exists a shift-invariant subset $S' \subset S \cap C(\mathbb{R}, X)$, such that $\pi(S') = A,$ which implies $S' \subset S_A.$ Therefore, we have $S' \subset \mathrm{inv}\, S_A$ and $A = \pi(S') \subset \pi\qty(\mathrm{inv}\,S_A).$ Conversely, we have $\pi\qty(\mathrm{inv}\,S_A) \subset \pi\qty(S_A) \subset A$ by definition. Therefore, we obtain $A = \pi\qty(\mathrm{inv}\,S_A).$

The converse follows from Lemma \ref{lem_winv}.
\end{proof}
\begin{lemma}\label{lem_inv2}
Let $S\subset C_s(\mathbb{R}, X)$ be a shift-invariant set that satisfies the compactness axiom and a compact subset $A \subset X$ be weakly invariant with respect to $S.$ Then, the maximal shift invariant subset of $S_A$ is maximal with respect to phase space.
\end{lemma}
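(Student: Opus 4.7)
The plan is to establish the universal property in the definition directly. Given the hypothesis, Lemma \ref{lem_inv1} already yields $\pi(\mathrm{inv}\,S_A) = A$, so what must be shown is: for every shift-invariant $S'' \subset S$ with $\pi(S'') = A$, one has $S'' \subset \mathrm{inv}\,S_A$. Fix an arbitrary $\phi \in S''$; the task reduces to verifying $\sigma(t, \phi) \in S_A$ for every $t \in \mathbb{R}$, because $\mathrm{inv}\,S_A$ is the largest shift-invariant subset of $S_A$ and hence contains any shift-orbit lying in $S_A$.

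The first step is to show $\mathrm{im}\,\phi \subset A$. This is a direct consequence of shift-invariance of $S''$: for each $t \in \mathrm{dom}\,\phi$, the map $\sigma(-t, \phi)$ lies in $S''$ and has $0$ in its domain, so $\phi(t) = \pi\bigl(\sigma(-t, \phi)\bigr) \in \pi(S'') = A$. The second step is to promote $\phi$ to a globally defined trajectory. Since $\mathrm{im}\,\phi \subset A$ with $A$ compact and $S$ satisfying the compactness axiom, Theorem 2.17 of \cite{suda2022equivalence} (the same extension result invoked in the proof of Theorem \ref{thm_glob}) gives $\mathrm{dom}\,\phi = \mathbb{R}$; in particular, $\phi \in S \cap C(\mathbb{R}, X)$.

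The final step is routine once $\phi$ is globally defined: for every $t \in \mathbb{R}$, $\sigma(t, \phi)$ has full domain and satisfies $\sigma(t, \phi)(0) = \phi(t) \in A$, so $\sigma(t, \phi) \in S_A$. Thus the entire shift-orbit of $\phi$ lies in $S_A$, is itself shift-invariant, and therefore is contained in $\mathrm{inv}\,S_A$. In particular $\phi \in \mathrm{inv}\,S_A$, which proves $S'' \subset \mathrm{inv}\,S_A$ and hence the maximality with respect to phase space.

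The only nontrivial step is the globalization in the second paragraph; the rest is a careful unwinding of the definitions of $S_A$, $\pi$, and shift-invariance. Without that globalization lemma, one cannot rule out the possibility that $\phi$ has a proper domain and hence fails to belong to $\mathrm{inv}\,S_A$ even though its image sits inside $A$, so the compactness of $A$ together with the compactness axiom is exactly what makes the argument go through.
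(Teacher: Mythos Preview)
Your argument is correct and follows the paper's route: show that any shift-invariant $S'' \subset S$ with $\pi(S'') = A$ lies inside $S_A$, then invoke the maximality of $\mathrm{inv}\,S_A$ among shift-invariant subsets of $S_A$. The paper compresses this to the single line ``$\pi(S') = A$ implies $S' \subset \pi^{-1}(A)$,'' whereas you spell out the globalization step (via Theorem~2.17 of \cite{suda2022equivalence}, as in Theorem~\ref{thm_glob}) that justifies why every $\phi \in S''$ actually has $0$ in its domain; one minor slip to fix: with the paper's convention $\sigma(t,\phi)(0) = \phi(t)$, your $\sigma(-t,\phi)$ should read $\sigma(t,\phi)$.
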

\begin{proof}
Let $S' \subset S$ be a shift-invariant subset with $\pi(S') = A.$ Then, we have $S' \subset \pi^{-1}(A)$, which implies that $S' \subset \mathrm{inv}\,S_A.$
\end{proof}
\begin{proof}[Proof of Theorem \ref{thm_cor}]
Let $\mathrm{Inv}_c(S)$ be the set of all $S' \subset S$ such that shift invariant, maximal with respect to phase space and $\pi(S')$ is compact. Then, we can define a map 
\[
	\mathrm{Inv}_c(S) \to \{A\subset X\mid A \text{ is weakly invariant and compact} \}
\]
by $S' \mapsto \pi(S')$ by Lemma \ref{lem_winv}. This map is surjective by Lemmas \ref{lem_inv1} and \ref{lem_inv2}. By the definition of maximality with respect to the phase space, this is also injective.
\end{proof}

In fact, if a weak invariant set $A$ is compact, $\mathrm{inv}\,S_A$ is also compact. This is established by the following result and the fact that the closure of an invariant set is also invariant. Therefore, Theorem \ref{thm_cor} can be considered as a correspondence between a class of compact invariant subsets of $S$ and compact weakly invariant subsets.
\begin{lemma}\label{thm_cptness}
Let $S$ satisfy the compactness axiom and the existence axiom on $\{0\}\times A$. Then, $S_A$ is compact if $A$ is compact.
\end{lemma}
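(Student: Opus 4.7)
The plan is to reduce compactness of $S_A$ to compactness of a preimage under the restricted evaluation map $e_S$, which is handled directly by the compactness axiom. First, I would unpack the definition of $S_A = \pi^{-1}(A) \cap S$: a map $\phi\in S$ lies in $S_A$ precisely when $0 \in \mathrm{dom}\,\phi$ and $\phi(0) \in A$. This yields the identification
\[
\{0\} \times S_A \;=\; e_S^{-1}\!\left(\{0\}\times A\right),
\]
since any $(t,\phi) \in e_S^{-1}(\{0\}\times A)$ must satisfy $t=0$, $0\in\mathrm{dom}\,\phi$, and $\phi(0)\in A$, and conversely.

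Next, I would observe that $\{0\}\times A$ is compact in $\mathbb{R}\times X$ as a product of compact sets. The compactness axiom asserts that $e_S$ is proper, so its preimage of this compact set is compact; hence $\{0\}\times S_A$ is compact in $\mathbb{R}\times S$. The projection $\mathrm{p}\colon \mathbb{R}\times S \to S$ onto the map component is continuous and sends $\{0\}\times S_A$ onto $S_A$, so $S_A$ is compact as the continuous image of a compact set. This is the same scheme as in the proof of Theorem \ref{barbashin}, with $\{0\}\times A$ replacing $\{0\}\times X$.

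The existence axiom on $\{0\}\times A$ does not actually enter the compactness argument; what it buys is the nonemptiness of $S_A$ (every $x\in A$ arises as $\phi(0)$ for some $\phi\in S_A$), which is presumably why it is included in the hypothesis given the intended use of the lemma in connection with Theorem \ref{thm_cor}. I do not foresee any serious obstacle: the whole argument amounts to rewriting $\{0\}\times S_A$ as a preimage of a compact set under a proper map and projecting, so the work is entirely bookkeeping about what $e_S^{-1}$ means for a slice $\{0\}\times A$.
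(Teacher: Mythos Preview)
Your proposal is correct and follows essentially the same route as the paper: identify $\{0\}\times S_A = e_S^{-1}(\{0\}\times A)$, use properness of $e_S$ on the compact set $\{0\}\times A$, and project. Your remark that the existence axiom is not actually needed for the compactness conclusion (only for nonemptiness of $S_A$) is accurate; the paper's hypothesis is slightly stronger than required for the bare statement.
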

\begin{proof}
As $e_{S}^{-1}\qty(\{0\} \times A) = \{0\} \times S_A$ and $\{0\}\times A$ is compact, $S_A$ is compact.
\end{proof}
\begin{example}
For a shift-invariant set $S\subset C_s(\mathbb{R}, X),$ equilibrium points correspond with constant maps in $S$ according to Theorem \ref{bebutov}. Theorem \ref{thm_cor} can be viewed as a generalization of this correspondence by the following observation: Obviously, the maximal shift-invariant subset of $S_{\{x\}}$ comprises at most one map, that is, the constant map with value $x$. Conversely, a singleton $\{x\}$ is weakly invariant if and only if $x$ is an equilibrium point of $S$.
\end{example}
If the phase space $X$ is compact and the solutions are defined globally, Theorem \ref{thm_cor} can be improved.
\begin{theorem}
Let $X$ be compact and $S\subset C(\mathbb{R}, X)$ be a shift-invariant set. 
There exists a one-to-one correspondence between the sets
\[
  \{S' \subset S \mid \text{shift invariant and maximal w.r.t. phase space}\}
\]
and
\[
	\{A\subset X\mid A \text{ is weakly invariant} \}.
\]
\end{theorem}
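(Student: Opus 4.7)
The plan is to adapt the proof of Theorem \ref{thm_cor}, with the key observation that the hypothesis $S \subset C(\mathbb{R}, X)$ already forces every element of $S$ to have domain $\mathbb{R}$, so we need not invoke Theorem \ref{thm_glob} (which required the compactness axiom together with compactness of $\bar A$) to globalize trajectories. This lets us drop both the compactness axiom on $S$ and the compactness hypothesis on $A$ that appeared in Theorem \ref{thm_cor}.

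First I would define the candidate map $\Psi(S') := \pi(S')$. By Lemma \ref{lem_winv}, $\pi(S')$ is weakly invariant whenever $S'$ is shift-invariant, so $\Psi$ takes values in the target set. Injectivity is immediate from the definition of maximality with respect to the phase space: if $\pi(S_1') = \pi(S_2')$ and both are maximal, then $S_2' \subset S_1'$ and $S_1' \subset S_2'$.

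For surjectivity, given a weakly invariant $A \subset X$, I would consider $S_A = \pi^{-1}(A)\cap S$ and its maximal shift-invariant subset $\mathrm{inv}\, S_A$, and verify $\pi(\mathrm{inv}\, S_A) = A$. The inclusion $\pi(\mathrm{inv}\, S_A) \subset A$ is automatic from $\mathrm{inv}\, S_A \subset S_A$. For the reverse inclusion, fix $x \in A$ and, using weak invariance, choose $\phi \in S$ with $\phi(0) = x$ and $\mathrm{im}\, \phi \subset A$. Since $\phi$ is defined on all of $\mathbb{R}$, each shifted map $\sigma(t,\phi)$ lies in $S$ by shift invariance and satisfies $\sigma(t,\phi)(0) = \phi(t) \in A$; hence the full orbit $\{\sigma(t,\phi)\mid t\in\mathbb{R}\}$ sits in $S_A$. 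As this orbit is itself shift-invariant, it lies in $\mathrm{inv}\, S_A$, and so $x = \pi(\phi) \in \pi(\mathrm{inv}\, S_A)$. The maximality of $\mathrm{inv}\, S_A$ with respect to the phase space is then precisely the argument of Lemma \ref{lem_inv2}: any shift-invariant $S'' \subset S$ with $\pi(S'') = A$ lies in $\pi^{-1}(A) \cap S = S_A$, hence in $\mathrm{inv}\, S_A$.

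I do not foresee a serious obstacle: the substantive content is recycled from Theorem \ref{thm_cor}, with the role previously played by Theorem \ref{thm_glob} now handled directly by the hypothesis $S \subset C(\mathbb{R},X)$. The compactness of $X$ does not appear to enter the argument outlined above; it is presumably stated to match the ambient setting of the section, or is used only implicitly through the fact that in practice $S \subset C(\mathbb{R}, X)$ arises most naturally for compact $X$.
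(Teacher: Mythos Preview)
Your proposal is correct and matches the paper's approach essentially line for line: the paper also defines the map $S'\mapsto\pi(S')$, invokes Lemma~\ref{lem_winv} for well-definedness, obtains surjectivity by rerunning the construction of Theorem~\ref{thm_glob} (which, as you note, trivializes because $S\subset C(\mathbb{R},X)$ already gives global domains), and then appeals to the analogues of Lemmas~\ref{lem_inv1} and~\ref{lem_inv2}. Your observation that the compactness of $X$ is not actually used in the argument is also correct.
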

\begin{proof}
Let $\mathrm{Inv}_c(S)$ be the set of all $S' \subset S$, such that the shift is invariant and maximal with respect to the phase space. Using Lemma \ref{lem_winv}, a map
\[
	\mathrm{Inv}_c(S) \to \{A\subset X\mid A \text{ is weakly invariant} \}
\]
can be defined by $S' \mapsto \pi(S')$. If $A \subset X$ is weakly invariant, we can show the existence of a shift-invariant subset $S' \subset S_A$ by an argument similar to Theorem \ref{thm_glob}, enabling us to obtain results similar to those of Lemmas \ref{lem_inv1} and \ref{lem_inv2}.
\end{proof}

For strong invariance, we have the following result.
\begin{theorem}
Let $S\subset C(\mathbb{R}, X)$ be a shift-invariant set that satisfies the compactness axiom. A subset $A \subset X$ is strongly invariant with respect to $S$ if and only if $S_A$ is invariant; that is, $S_A = \mathrm{inv}\,S_A$.
\end{theorem}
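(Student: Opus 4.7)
The plan is to unfold the definitions on both sides and verify that they say the same thing about trajectories starting in $A$. Since every $\phi \in S$ has $\mathrm{dom}\,\phi = \mathbb{R}$ by hypothesis, we can identify $\mathrm{im}\,\phi \subset A$ with the condition $\phi(t) \in A$ for all $t \in \mathbb{R}$, and in turn with $\sigma(t,\phi) \in S_A$ for all $t \in \mathbb{R}$. The compactness axiom will not actually be required for this argument; it is inherited from the ambient setting of the section.

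For the forward direction, I would assume $A$ is strongly invariant and show $S_A$ is shift-invariant, which is equivalent to $S_A = \mathrm{inv}\,S_A$ by the definition of the maximal shift-invariant subset. Given $\phi \in S_A$, we have $\phi(0) \in A$, and strong invariance of $A$ gives $\mathrm{im}\,\phi \subset A$. Thus $\sigma(t,\phi)(0) = \phi(t) \in A$ for every $t$, so $\sigma(t,\phi) \in S_A$.

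For the converse, assume $S_A = \mathrm{inv}\,S_A$, so $S_A$ is shift-invariant. Given $\phi \in S$ with $\phi(0) \in A$, we have $\phi \in S_A$, and hence $\sigma(t,\phi) \in S_A$ for all $t \in \mathbb{R}$ by shift-invariance. Evaluating at $0$ yields $\phi(t) \in A$ for all $t$, that is, $\mathrm{im}\,\phi \subset A$, which is strong invariance.

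There is no substantial obstacle here; the statement is essentially a tautology once the definitions are properly aligned. The only small point worth checking is that the equivalence between $\mathrm{im}\,\phi \subset A$ and ``$\sigma(t,\phi) \in S_A$ for all $t$'' uses $\mathrm{dom}\,\phi = \mathbb{R}$, which follows from $S \subset C(\mathbb{R},X)$.
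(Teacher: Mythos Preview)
Your proof is correct and follows essentially the same approach as the paper's own proof: unfold the definitions and translate between $\mathrm{im}\,\phi \subset A$ and $\sigma(t,\phi) \in S_A$ for all $t$. The only cosmetic difference is that the paper cites Lemma~\ref{lem_inv} (the reformulation of strong invariance via $\mathrm{im}\,\phi \cap A \neq \emptyset$) in the forward direction, whereas you use the original definition directly; your observation that the compactness axiom is not actually used is also accurate.
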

\begin{proof}
Let $A \subset X$ be strongly invariant and $\phi \in S_A.$ From Lemma \ref{lem_inv}, we have $\mathrm{im}\, \phi \subset A$ because $\phi(0) \in \mathrm{im}\, \phi \cap A.$ Therefore, $\sigma(t, \phi) \in \pi^{-1}(A)$ for all $t \in \mathbb{R}$, which implies that $S_A$ is shift-invariant. 

Conversely, if $S_A$ is shift-invariant, all $\phi \in S$ with $\phi(0) \in A$ satisfy $\sigma(t, \phi ) \in S_A$. Therefore, $\phi(t) \in A$ for all $t \in \mathbb{R}.$
\end{proof}

\section{Asymptotic properties}\label{asym}
In this section, we consider the asymptotic properties of the dynamics of the phase space induced by the solution space. In particular, we discuss how the definition of limit sets can be generalized.

First, we recall the definition of the limit sets for a flow.
\begin{definition}
For a flow $\Phi$ on $X$ and $A \subset X,$ we define 
\[
\begin{aligned}
	\omega(A) &:= \bigcap_{t>0} \overline{\bigcup_{s \geq t} \Phi(s, A)}\\
	\alpha(A) &:= \bigcap_{t<0} \overline{\bigcup_{s \leq t} \Phi(s, A)}.
\end{aligned}
\]
\end{definition}
\begin{remark}
For a flow $\Phi$ on $X$ and $A \subset X$, where $X$ is the first countable space, we have that
\[
\begin{aligned}
	\omega(A) &= \{y \in X \mid \text{there exist } a_n \in A,t_n \text{ s.t. } \Phi(t_n, a_n ) \to y, t_n \to \infty\},\\
	\alpha(A) &= \{y \in X \mid \text{there exist } a_n \in A,t_n \text{ s.t. } \Phi(t_n, a_n ) \to y, t_n \to -\infty \}.
\end{aligned}
\]
Additionally, we note that $\omega(A)$ and $\alpha(A)$ are closed-invariant sets.
\end{remark}
Because $(\sigma, S)$ is a flow, the notion of limit sets is naturally defined. If $S$ is a well-posed set, the limit sets of $(\sigma, S)$ can be described in terms of sections.
\begin{theorem}
Let $X$ be locally compact, $S$ be a well-posed set, and $\Phi$ be the flow induced by $S$ via Theorem \ref{yorke}. Then,
\begin{enumerate}
	\item For each $x \in X,$ $S_{\omega(x)} = \omega(S_x).$
	\item For each $x \in X,$ $S_{\alpha(x)} = \alpha(S_x).$
\end{enumerate}
\end{theorem}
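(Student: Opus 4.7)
The proof reduces to transporting $\omega$- and $\alpha$-limit sets across the topological conjugacy supplied by Theorem \ref{yorke}. Since $X$ is locally compact and $S$ is well-posed, the evaluation-at-$0$ map $\pi : S \to X$ is a homeomorphism that conjugates $(\sigma, S)$ to $(\Phi, X)$. Uniqueness gives $S_x = \{\phi_x\}$ where $\phi_x(0) = x$, and $S_A = \pi^{-1}(A)$ for any $A \subset X$. Homeomorphisms conjugating continuous flows preserve limit sets, so $\pi(\omega(\{\phi_x\})) = \omega(\pi(\phi_x)) = \omega(x)$, and applying $\pi^{-1}$ yields $\omega(S_x) = S_{\omega(x)}$; the identical argument with $\alpha$ in place of $\omega$ proves (2).

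For a self-contained derivation, I would argue sequentially (legitimate since $C_p(\mathbb{R}, X)$, and hence $S$, is second countable). For $\omega(S_x) \subset S_{\omega(x)}$: any $\phi \in \omega(\{\phi_x\})$ arises as $\sigma(t_n, \phi_x) \to \phi$ in $S$ for some $t_n \to \infty$, and evaluating at $0$ gives $\Phi(t_n, x) = \phi_x(t_n) \to \phi(0)$, so $\phi(0) \in \omega(x)$. For the reverse inclusion, take $\phi \in S_{\omega(x)}$, pick $t_n \to \infty$ with $\Phi(t_n, x) \to \phi(0)$, and set $\psi_n := \sigma(t_n, \phi_x) \in S$; since $\psi_n(0) = \phi_x(t_n) \to \phi(0)$, Theorem \ref{cptness} furnishes a subsequence $\psi_{n_k} \to \psi \in S$ with $\psi(0) = \phi(0)$, and the uniqueness axiom forces $\psi = \phi$. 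Hence $\phi \in \omega(\{\phi_x\})$. The treatment of $\alpha$ is identical with $t_n \to -\infty$.

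I anticipate no genuine obstacle: the only technical point is justifying the sequential characterization of limit sets in $S$, which is immediate from second countability of the ambient function space. The compactness axiom plus uniqueness is what makes the nontrivial inclusion go through, mirroring precisely the role these axioms play in Theorem \ref{yorke} itself.
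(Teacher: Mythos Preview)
Your proposal is correct and your self-contained sequential derivation is essentially the paper's own proof: the paper also notes $S_x=\{\phi\}$, uses continuity of evaluation at $0$ for the inclusion $\omega(S_x)\subset S_{\omega(x)}$, and for the reverse inclusion applies the compactness axiom to $\{(0,\Phi(t_n,x))\}_n$ to extract a convergent subsequence of $\sigma(t_n,\phi)$ and then invokes uniqueness. Your preliminary conjugacy framing via $\pi$ is a clean conceptual shortcut the paper does not make explicit, but the substance of the argument is the same.
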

\begin{proof}
We show the result for the $\omega$ limit set. Note that we have $S_x = \{\phi\}$, where $\phi(t) = \Phi(t, x).$ 

If $\psi \in S_{\omega(x)},$ we have $\psi(0) = \lim_{n\to \infty} \Phi(t_n, x)$ for some sequence $t_n \to \infty$ as $n \to \infty.$ As the closure $W$ of the set $\{\qty(0, \Phi(t_n, x)) \mid n \in \mathbb{N}\}$ is compact, we may find a convergent subsequence of $\qty(0, \sigma(t_n, \phi)) \in e_{S}^{-^1} W.$ Combined with the uniqueness axiom, it shows that $\psi \in \omega(S_{x}).$

Conversely, let $\psi \in \omega(S_{x}).$ Then, we have $\psi = \lim_{n\to \infty} \sigma(t_n, \phi)$ for some sequence $t_n \to \infty$ as $n \to \infty.$ By the continuity of evaluation at $0$, we obtain $\psi(0) = \lim_{n\to \infty} \phi(t_n),$ which implies that $\psi(0) \in \omega(x).$
\end{proof}
The preceding theorem implies that $\pi\left( \omega(S_x)\right) = \omega(x)$ for a flow. This observation motivates the following definition:
\begin{definition}
Let $S$ be a shift-invariant set that satisfies the compactness axiom. For each $x \in X,$ we define
\[
\begin{aligned}
	\omega_S(x) &:= \pi\left( \omega(S_x)\right),\\
	\alpha_S(x) &:= \pi\left( \alpha(S_x)\right).
\end{aligned}
\]
\end{definition}

We consider how this definition of limit sets relates to the notion of limit sets used in Yorke's original paper. 
\begin{definition}[Strauss-Yorke definition of limit set, \cite{strauss1967asymptotically, yorke1969spaces}]
Let $S$ be a shift-invariant set that satisfies the compactness axiom. For $B \subset S,$ we define
\[
\begin{aligned}
	\Lambda^+(B)&:= \{y \in X \mid \text{there exist } \phi_n \in B,t_n \in \mathrm{dom}\, \phi_n \text{ s.t. } \phi_n(t_n) \to y, t_n \to \infty\}\\
	\Lambda^-(B)&:= \{y \in X \mid \text{there exist } \phi_n \in B,t_n \in \mathrm{dom}\, \phi_n \text{ s.t. } \phi_n(t_n) \to y, t_n \to -\infty \}
\end{aligned}
\]
\end{definition}

\begin{lemma}
Let $S$ be a shift-invariant set, where the compactness axiom is satisfied. For $B \subset S,$ we have
\[
\begin{aligned}
	\Lambda^+(B)&= \pi(\omega(B))\\
	\Lambda^-(B)&= \pi(\alpha(B)).
\end{aligned}
\]
In particular, we have 
\[
\begin{aligned}
	\omega_S(x) &= \Lambda^+(S_x),\\
	\alpha_S(x) &= \Lambda^-(S_x).
\end{aligned}
\]
\end{lemma}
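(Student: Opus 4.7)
The plan is to prove the $\omega$/$\Lambda^+$ identity by showing both inclusions and then get the $\alpha$/$\Lambda^-$ identity by a symmetric time-reversal argument. The last two identities about $\omega_S$ and $\alpha_S$ then follow immediately by specializing $B = S_x$ and recalling the definitions.

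For the inclusion $\pi(\omega(B)) \subset \Lambda^+(B)$, I would take $\psi \in \omega(B)$ with $0 \in \mathrm{dom}\,\psi$ (so that $\pi(\psi) = \psi(0)$ is defined) and, using the second countability of $C_p(\mathbb{R},X)$, pick sequences $\phi_n \in B$ and $t_n \to \infty$ with $\sigma(t_n, \phi_n) \to \psi$. For $n$ sufficiently large we have $0 \in \mathrm{dom}\,\sigma(t_n, \phi_n)$, equivalently $t_n \in \mathrm{dom}\,\phi_n$, and by continuity of the evaluation-at-$0$ map (Lemma \ref{lem_eval}) we get $\phi_n(t_n) = \sigma(t_n, \phi_n)(0) \to \psi(0)$. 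This exhibits $\psi(0)$ as an element of $\Lambda^+(B)$.

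For the reverse inclusion $\Lambda^+(B) \subset \pi(\omega(B))$, I would start from $y \in \Lambda^+(B)$ with witnessing sequences $\phi_n \in B$, $t_n \in \mathrm{dom}\,\phi_n$, $t_n \to \infty$, and $\phi_n(t_n) \to y$. Shift-invariance of $S$ gives $\psi_n := \sigma(t_n, \phi_n) \in S$, and $\psi_n(0) = \phi_n(t_n) \to y$. Here is where the compactness axiom does the essential work: applying Theorem \ref{cptness} to the constant sequence of times $0$ and the points $\psi_n(0) \to y$, I extract a subsequence $\psi_{n_i} \to \psi$ in $S$ with $\psi(0) = y$. Since $t_{n_i} \to \infty$, for any $T > 0$ we eventually have $\psi_{n_i} \in \bigcup_{s \geq T} \sigma(s, B)$, so $\psi \in \overline{\bigcup_{s \geq T} \sigma(s, B)}$; intersecting over $T$ yields $\psi \in \omega(B)$, and therefore $y = \pi(\psi) \in \pi(\omega(B))$. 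The $\alpha$ statement is proved by the same argument with $t_n \to -\infty$.

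The only delicate step is the reverse inclusion: a priori $y \in \Lambda^+(B)$ only gives a sequence of points $\phi_n(t_n)$ in the phase space, and one has to lift this to a convergent sequence in the solution space in order to produce an element of $\omega(B)$. That lifting is precisely the content of the compactness axiom as formulated in Theorem \ref{cptness}, which is why the hypothesis enters. Once this step is in place, the corollary $\omega_S(x) = \Lambda^+(S_x)$ and $\alpha_S(x) = \Lambda^-(S_x)$ is just an unpacking of the definitions of $\omega_S$ and $\alpha_S$ applied to $B = S_x$.
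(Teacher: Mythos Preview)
Your proposal is correct and follows essentially the same approach as the paper: both directions are proved exactly as you describe, with the compactness axiom (via Theorem~\ref{cptness}, equivalently the properness of $e_S$) doing the lifting work in the inclusion $\Lambda^+(B)\subset \pi(\omega(B))$, and continuity of evaluation at $0$ handling the other inclusion. Your argument is in fact slightly more detailed than the paper's in justifying $\psi\in\omega(B)$ explicitly via the intersection-of-closures description.
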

\begin{proof}
Let $x \in \Lambda^+(B).$ Then, we may find $\phi_n \in B$ and $t_n \in \mathrm{dom}\, \phi_n $ such that $ \phi_n(t_n) \to x$ and $t_n \to \infty.$ As $\phi_n \in B \subset S,$ we have $\psi_n := \sigma(t_n, \phi_n) \in S$ owing to the invariance. Therefore, we obtain
\[
	(0, \psi_n) \in e_S^{-1}\overline{\{(0, \phi_n(t_n))\}_n}.
\]
From the compactness axiom, we may find a convergent subsequence $\{\psi_{n_i}\}_i.$ For the limit $\psi,$ we see that $\psi \in \omega(B).$ By the continuity of $\pi,$ we have
\[
	x=\lim_{i \to \infty} \phi_{n_i}(t_{n_i}) = \lim_{i \to \infty} \pi(\psi_{n_i}) =\pi(\psi).
\] 
Therefore, $x\in \pi(\omega(B)).$

Conversely, let $x\in \pi(\omega(B)).$ Then, we may find $\phi_n \in B$ and $t_n$ such that
\[
	\psi = \lim_{n \to \infty} \sigma(t_n, \phi_n),
\]
$t_n \to \infty$ and $\pi(\psi)=x.$ As $0 \in \mathrm{dom}\, \psi,$ we have $t_n \in \mathrm{dom}\, \phi_n$ for sufficiently large $n$, and hence, $ \phi_n(t_n) \to x.$ 
\end{proof}
Now, we generalize classical results on the limit sets of a flow. 
\begin{theorem}
Let $S$ be a shift-invariant set that satisfies the compactness axiom. If a compact subset $K \subset X$ is weakly invariant, $\omega_S(x) \neq \emptyset$ for all $x \in K.$
\end{theorem}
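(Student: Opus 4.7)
The plan is to unwind the definitions and exhibit an explicit element of $\omega(S_x)$, which then projects into $\omega_S(x)$ under $\pi$.

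First, I would use weak invariance of $K$ to obtain a witness $\phi \in S$ with $\phi(0) = x$ and $\mathrm{im}\,\phi \subset K$. Then I would invoke Theorem \ref{thm_glob} (applied to the weakly invariant compact set $K$, noting $\bar K = K$) to promote $\phi$ to a globally defined map, so that $\mathrm{dom}\,\phi = \mathbb{R}$ and in particular $\phi(n) \in K$ for every positive integer $n$. This gives $\phi \in S_x$.

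Next, I would build a candidate limit point in $\omega(S_x) \subset S$ via the compactness axiom. Consider the sequence $\sigma(n, \phi) \in S$ for $n \in \mathbb{N}$. Since $e_S(0, \sigma(n,\phi)) = (0, \phi(n)) \in \{0\} \times K$, we have $(0, \sigma(n, \phi)) \in e_S^{-1}(\{0\} \times K)$, which is compact by the compactness axiom applied to the compact set $\{0\} \times K$. Therefore a subsequence $\sigma(n_i, \phi)$ converges to some $\psi \in S$ with $n_i \to \infty$. By the flow definition of $\omega$-limit, $\psi \in \omega(S_x)$, so $\pi(\psi) \in \pi(\omega(S_x)) = \omega_S(x)$, proving it is nonempty.

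I do not expect any real obstacle here; the argument is essentially a repackaging of the compactness axiom and the fact that weakly invariant compact sets yield global trajectories (Theorem \ref{thm_glob}). The only small point to verify is that $\sigma(n_i,\phi) \to \psi$ with $n_i \to \infty$ and $\phi \in S_x$ indeed places $\psi$ in $\omega(S_x)$, which follows immediately from the $\bigcap_{t>0} \overline{\bigcup_{s \geq t} \sigma(s, S_x)}$ formula applied to the constant sequence $\phi \in S_x$.
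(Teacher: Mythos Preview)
Your proposal is correct and follows the same approach as the paper's (very terse) proof: use weak invariance together with Theorem~\ref{thm_glob} to obtain a globally defined $\phi \in S_x$ with image in the compact set $K$, then extract a limit. The only cosmetic difference is that the paper routes the final step through the identity $\omega_S(x) = \Lambda^+(S_x)$ established in the preceding lemma (so compactness of $K$ alone yields a limit of $\phi(t_n)$), whereas you apply the compactness axiom directly to produce $\psi \in \omega(S_x)$.
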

\begin{proof}
Let $x \in K.$ Because $K$ is weakly invariant, we may apply Theorem \ref{thm_glob} to obtain $\emptyset \neq \Lambda^+(S_x) = \omega_S(x).$
\end{proof}

\begin{theorem}[Main Theorem B]
Let $S$ be a shift-invariant set that satisfies the compactness axiom. Then, the limit sets $\omega_S(x)$ and $\alpha_S(x)$ are closed and weakly invariant for all $x \in X$.
\end{theorem}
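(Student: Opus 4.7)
The plan is to reduce both assertions to standard properties of the shift-flow's $\omega$-limit set $\omega(S_x) \subset S$, and then transfer them through $\pi$ using the compactness axiom. By the preceding lemma, $\omega_S(x) = \Lambda^+(S_x)$, which gives the concrete handle $y \in \omega_S(x) \iff \exists\, \phi_n \in S_x,\ t_n \to \infty$ with $\phi_n(t_n) \to y$.

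For closedness, I would take $y_k \in \omega_S(x)$ with $y_k \to y$, choose $\phi_k \in S_x$ and $t_k \to \infty$ with $\phi_k(t_k) \to y$, and set $\psi_k := \sigma(t_k, \phi_k) \in S$ so that $\psi_k(0) = \phi_k(t_k) \to y$. Since $\{(0, \psi_k(0))\}_k \cup \{(0,y)\}$ is compact in $\mathbb{R} \times X$, the compactness axiom (via Theorem \ref{cptness}) extracts a subsequence $\psi_{k_i} \to \psi$ in $S$ with $\psi(0) = y$. To see $\psi \in \omega(S_x)$, fix $\tau > 0$: eventually $t_{k_i} \geq \tau$, so $\psi_{k_i} \in \bigcup_{s \geq \tau} \sigma(s, S_x)$, and passing to the limit gives $\psi \in \overline{\bigcup_{s \geq \tau} \sigma(s, S_x)}$. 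Intersecting over $\tau > 0$ yields $\psi \in \omega(S_x)$, hence $y = \pi(\psi) \in \omega_S(x)$.

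For weak invariance, I first observe that $\omega(S_x)$ is shift-invariant: any $\psi \in \omega(S_x)$ is a limit $\psi = \lim_n \sigma(t_n, \phi_n)$ with $\phi_n \in S_x$ and $t_n \to \infty$, so for $r \in \mathrm{dom}\,\psi$ the shift $\sigma(r,\psi) = \lim_n \sigma(r+t_n, \phi_n)$ has the same form with $r + t_n \to \infty$, placing $\sigma(r,\psi) \in \omega(S_x)$. Given $z \in \omega_S(x)$, I pick $\psi \in \omega(S_x)$ with $\psi(0) = z$; since $S$ is closed by Theorem \ref{cl_thm}, $\psi \in S$. Then for each $t \in \mathrm{dom}\,\psi$, shift invariance gives $\sigma(t,\psi) \in \omega(S_x)$, so $\psi(t) = \pi(\sigma(t,\psi)) \in \omega_S(x)$, i.e., $\mathrm{im}\,\psi \subset \omega_S(x)$.

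The $\alpha$-case is entirely symmetric, replacing $t_n \to \infty$ by $t_n \to -\infty$ throughout. The one nontrivial step is the closedness argument: continuity of $\pi$ alone is not enough, since $\pi$ need not preserve closed sets. The compactness axiom is precisely what lets me upgrade control of $\psi_k(0)$ to control of the full maps $\psi_k$, and then the definition of $\omega(S_x)$ as an intersection of closures verifies that the extracted limit actually lies in $\omega(S_x)$ rather than just in some larger tail closure.
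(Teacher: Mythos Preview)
Your proof is correct and follows essentially the same strategy as the paper: weak invariance via shift-invariance of $\omega(S_x)$ (the paper simply cites Lemma~\ref{lem_winv}), and closedness by using the compactness axiom to lift a convergent sequence in $X$ to one in $S$ whose limit lies in $\omega(S_x)$. The only cosmetic difference is that the paper picks representatives $\phi_n \in \omega(S_x)$ with $\phi_n(0)=y_n$ directly and then invokes closedness of $\omega(S_x)$, whereas you pass through the $\Lambda^+$ description and a diagonal choice to verify $\psi \in \omega(S_x)$ from the definition.
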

\begin{proof}
If $\omega_S(x)$ is empty, the statements hold vacuously. We assume that $\omega_S(x) \neq \emptyset.$ By the invariance of $\omega(S_x)$ and Lemma \ref{lem_winv}, $\omega_S(x)$ is weakly invariant.

We show that $\omega_S(x)$ is closed. Let $y_n \in \omega_S(x)$ with $y = \lim_{n \to \infty} y_n$; then, we have $y_n = \phi_n(0)$ for $\phi_n \in \omega(S_x) \subset S.$ As $S$ satisfies the compactness axiom, we can find a convergent subsequence $\phi_{n_i} \to \psi$ by considering $\{\qty(0, y_n)\}_n.$ Because $\omega(S_x)$ is closed, we have $\psi \in\omega(S_x).$ By the continuity of $\pi,$ we have $y = \psi(0) \in \pi(\omega(S_x)).$ 
\end{proof}

The notion of recurrence, which is used in Section \ref{inv_meas}, can be defined as follows:
\begin{definition}
Let $S \subset C_s(\mathbb{R}, X)$ be compact and shift invariant.
A point $x \in X$ is \textbf{recurrent} for $S$ if 
\[
	x \in \omega_S(x).
\]
\end{definition}
Finally, let us briefly comment on the notion of attractor. As discussed in the next section, Yorke's theory is compatible with other formalisms of generalized dynamical systems. Accordingly, we can introduce various definitions of attractors.
\section{Comparison with other formalisms}\label{comp}
In this section, we compare the axiomatic theory of ODE with other formalisms of generalized dynamical systems, namely, generalized semiflow and multivalued semigroup.

 For simplicity, we consider the simplest case for the Yorke formalism, that is, when the phase space $X$ is a compact metric space and each map is globally defined.
\begin{remark}
In this case, the topology of a solution space in Yorke's theory coincides with that of Filippov's because both are reduced to the compact-open topology on $C(\mathbb{R}, X).$ 
\end{remark}
The generalized semiflow introduced by Ball is an important and frequently used formalism.
\begin{definition}[Generalized semiflow \cite{ball1997continuity}]\label{def_gen}
A \textbf{generalized semiflow} $G$ on $X$ is a family of maps $\phi: \mathbb{R}_{\geq 0} \to X$ that satisfy the following:
\begin{enumerate}
	\item For each $z \in X,$ there exists at least one $\phi \in G$ with $\phi(0) = z.$
	\item If $\phi \in G$ and $\tau \geq 0,$ then $\phi^{\tau} \in G$, where $\phi^{\tau}(t) = \phi(t+\tau).$
	\item If $\phi, \psi \in G$ and $\phi(t) = \psi(0),$ then $\theta \in G,$ where
	\[
		\theta(s) = \begin{cases}
						\phi(s) & s \leq t\\
						\psi(s-t)& s \geq t
					\end{cases}
	\]
		\item If $\phi_j \in G$ with $\phi_j(0) \to z,$ there exists a subsequence $\{\phi_{\mu}\}$ of $\{\phi_j\}$ and $\phi \in G$ with $\phi(0) = z$ and $\phi_{\mu} \to \phi$ pointwise.

\end{enumerate}
\end{definition}
As observed, the definition of generalized semiflow bears a similarity to Yorke's theory. The next result states this connection in an exact form:
\begin{theorem}\label{thm_y2g}
If a shift-invariant set $S$ satisfies the compactness, existence, and switching axioms on $\mathbb{R}_{\geq 0}\times X$, $S$ is a generalized semi-flow (if regarded as a set of maps of the form $\mathbb{R}_{\geq 0} \to X$).
\end{theorem}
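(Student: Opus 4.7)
The plan is to verify the four defining conditions of a generalized semiflow for the family $G := \{\phi|_{\mathbb{R}_{\geq 0}} \mid \phi \in S\}$, using the standing assumption of this section that $X$ is compact and $S \subset C(\mathbb{R}, X)$, so that each $\phi \in S$ is globally defined.

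Conditions (1) and (2) of Definition \ref{def_gen} are almost immediate. For (1), the existence axiom applied at $\{0\}\times X \subset \mathbb{R}_{\geq 0}\times X$ furnishes, for each $z \in X$, some $\phi \in S$ with $\phi(0) = z$, whose restriction to $\mathbb{R}_{\geq 0}$ lies in $G$. For (2), if $\phi \in G$ is lifted to $\tilde{\phi} \in S$ and $\tau \geq 0$, then shift invariance gives $\sigma(\tau, \tilde{\phi}) \in S$, and its restriction to $\mathbb{R}_{\geq 0}$ is exactly $\phi^{\tau}$.

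Condition (3) is where the switching axiom enters. Given $\phi, \psi \in G$ with $\phi(t) = \psi(0)$, I pick lifts $\tilde{\phi}, \tilde{\psi} \in S$ and apply shift invariance to obtain $\sigma(-t, \tilde{\psi}) \in S$. Since $\tilde{\phi}(t) = \phi(t) = \psi(0) = \sigma(-t, \tilde{\psi})(t)$, the switching axiom produces a map in $S$ that equals $\tilde{\phi}$ on $(-\infty, t]$ and $\sigma(-t, \tilde{\psi})$ on $[t, \infty)$, whose restriction to $\mathbb{R}_{\geq 0}$ is precisely the concatenation $\theta$ required by Definition \ref{def_gen}(3).

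The main step is condition (4), and it is where the compactness axiom does the work. Given $\phi_j \in G$ with $\phi_j(0) \to z$, lift to $\tilde{\phi}_j \in S$; then $(0, \tilde{\phi}_j(0)) \to (0, z)$ inside the compact set $\{0\}\times X$, so Theorem \ref{cptness} yields a subsequence $\tilde{\phi}_{j_i}$ converging in $S$ to some $\tilde{\psi}$ with $\tilde{\psi}(0) = z$. Since convergence in $S$ is uniform on compact subsets of $\mathbb{R}$, it implies pointwise convergence on $\mathbb{R}_{\geq 0}$, so $\tilde{\psi}|_{\mathbb{R}_{\geq 0}} \in G$ realizes the required pointwise limit. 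The only real subtlety is keeping track of the lifts $\tilde{\phi}_j$ (which need not be unique in the absence of the uniqueness axiom), but this is harmless for the axiom verifications, each of which translates directly into one of the three hypotheses imposed on $S$.
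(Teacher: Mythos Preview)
Your proof is correct and follows essentially the same approach as the paper: conditions (1)--(2) come directly from the existence axiom and shift invariance, condition (3) is obtained by applying the switching axiom to $\tilde\phi$ and $\sigma(-t,\tilde\psi)$, and condition (4) is handled via the compactness axiom (equivalently Theorem~\ref{cptness}) applied to the sequence $(0,\phi_j(0))$. Your version is slightly more explicit about the restriction/lift between $S$ and $G$, but the argument is the same.
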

\begin{proof}
We verify conditions for the generalized semilflow individually.

Conditions (1) and (2) follow from the existence axiom and shift-invariance, respectively. 

For condition (3), let $\phi, \psi \in S$ satisfy $\phi(t) = \psi(0)$ for some $t > 0.$ We then apply the switching axiom to $\phi$ and $\sigma(-t, \psi)$ to obtain their concatenation $\theta$.

To show (4), let $\phi_j \in S$ with $\phi_j(0) \to z.$ By applying the compactness axiom to the closure of $\{\qty(0, \phi_j(0))\}_j,$ we obtain a convergent subsequence of $\{\phi_j\},$ which is also pointwise convergent. 
\end{proof}
This result provides a way to obtain a generalized semiflow from a well-behaved shift-invariant subset of $C(\mathbb{R},X)$. Later, we consider the converse problem of obtaining a well-behaved shift-invariant subset of $C(\mathbb{R},X)$ from a generalized semiflow. 

In the context of set-valued dynamical systems, the multivalued semigroup is often used. Here, we introduce this concept in terms of a generalized semiflow.
\begin{definition}[\cite{caraballo2003comparison, moussa2017invariant}]
A \textbf{multivalued semigroup determined by a generalized semiflow $G$} is a family of set-valued operators $\{V(t)\}_{t \in \mathbb{R}_{\geq 0}}$ on $X$ given by
\[
	V(t) E := \{\phi(t) \mid \phi \in G \text{ and } \phi(0) \in E\}
\]
for $E \subset X.$
\end{definition}
For a multivalued semigroup determined by a generalized semiflow, we have
\[
	V(t + s) E = V(t) V(s) E
\]
if $t, s \geq 0$ and $E \subset X.$

If $V$ is a multivalued semigroup determined by a generalized semiflow $G \subset C(\mathbb{R}, X),$ we define
\[
	V(-t) E := \{\phi(-t) \mid \phi \in G \text{ and } \phi(0) \in E\}
\]
for $t>0.$

\begin{lemma}\label{lem_smg}
Let a multivalued semigroup $V$ be determined by a generalized semiflow $G \subset C(\mathbb{R}, X),$ where $G$ is shift invariant. Then the following hold:
\begin{enumerate}
	\item For all $E \subset X$, we have
	\[
		E \subset V(t) V(-t) E	
	\]
for all $t \in \mathbb{R}$.  
	\item For all $E \subset X$, we have
\[
	V(t)^{-1} E := \{x \in X \mid V(t) x \cap E \neq \emptyset \} = V(-t) E
\]
for all $t \in \mathbb{R}$. 
\end{enumerate}
\end{lemma}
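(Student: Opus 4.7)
The plan is that both parts reduce to an essentially symmetric use of the shift invariance of $G \subset C(\mathbb{R}, X)$. The crucial observation is that since $G$ is now assumed to be shift-invariant and to consist of globally defined (two-sided) trajectories, we may freely translate the ``base point'' of any $\phi \in G$: for every $s \in \mathbb{R}$ and $\phi \in G$, the map $\sigma(s,\phi)$ again belongs to $G$ and sends $0$ to $\phi(s)$. This lets us turn any point on a trajectory into an initial condition, which is exactly what is needed to show that $V(-t)$ behaves like an inverse of $V(t)$.

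For part (1), given $x \in E$, I would apply condition (1) of Definition \ref{def_gen} to obtain some $\phi \in G$ with $\phi(0) = x$; then $\phi(-t) \in V(-t)E$ by definition. Setting $\psi := \sigma(-t,\phi)$, shift invariance gives $\psi \in G$, and the computations $\psi(0) = \phi(-t) \in V(-t)E$ and $\psi(t) = \phi(0) = x$ show that $x \in V(t)V(-t)E$.

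For part (2), I would verify the two inclusions separately by the same shift trick. If $x$ lies in $V(t)^{-1}E$, pick $\phi \in G$ with $\phi(0) = x$ and $\phi(t) \in E$; then $\sigma(t,\phi) \in G$ has $\sigma(t,\phi)(0) = \phi(t) \in E$ and $\sigma(t,\phi)(-t) = x$, so $x \in V(-t)E$. Conversely, if $x \in V(-t)E$, choose $\phi \in G$ with $\phi(0) \in E$ and $\phi(-t) = x$; then $\sigma(-t,\phi) \in G$ satisfies $\sigma(-t,\phi)(0) = x$ and $\sigma(-t,\phi)(t) = \phi(0) \in E$, so $V(t)x \cap E \neq \emptyset$, i.e.\ $x \in V(t)^{-1}E$.

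There is no real obstacle here; this is a direct unpacking of the definitions once we have shift invariance available. It is worth noting, however, why the hypothesis matters: in the original generalized semiflow setting trajectories are only defined for $t \geq 0$, so the translates $\sigma(-t,\phi)$ would be ill-defined and part (1) in particular would fail. The present hypothesis $G \subset C(\mathbb{R},X)$, together with shift invariance, is precisely what forces $V(-t)$ to be a genuine set-theoretic inverse of $V(t)$ on every subset $E$.
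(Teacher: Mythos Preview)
Your proof is correct and follows essentially the same approach as the paper: both use condition (1) of Definition~\ref{def_gen} to pick a trajectory through $x$, then apply shift invariance to relocate the base point via $\sigma(\pm t,\phi)$, reading off the desired memberships from $\psi(0)$ and $\psi(\pm t)$. The only difference is expository---you spell out the intermediate $\psi$ explicitly and add a closing remark on why shift invariance is needed.
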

\begin{proof}
(1): Let $x \in E.$ By condition (1), there exists $\phi \in G$ with $\phi(0) = x.$ From the shift invariance of $G,$ we have $\sigma(-t, \phi) \in G.$ Then, we have $\phi(-t) \in V(-t) E$; therefore, $x = \sigma(-t, \phi)(t) \in V(t)V(-t) E. $
 
(2): Let $x \in V(t)^{-1} E$. By definition, there exists $\phi \in G$ with $\phi(0) = x $ and $\phi(t) \in E.$ By shift invariance, we have $\sigma(t,\phi)\in G.$ Therefore, $x = \sigma(t,\phi)(-t)\in V(-t) E.$ Conversely, let $x \in V(-t) E.$ Then, there exists $\phi \in G$ with $\phi(0) \in E $ and $\phi(-t) =x.$ By considering $\sigma(-t, \phi)$, we obtain $x \in V(t)^{-1} E$.
\end{proof} 
\begin{remark}
In general, inclusion in the first assertion is strict.
\end{remark}
Multivalued semigroup can be described in terms of a section.
\begin{lemma}\label{lem_ss}
Let a multivalued semigroup $V$ be determined by a generalized semiflow $G \subset C(\mathbb{R}, X),$ where $G$ is shift invariant. Then, for all $A \subset X$ and $t\in \mathbb{R}$, we have
\[
	V(t) A = \pi\qty(\sigma(t, S_A)).
\]
\end{lemma}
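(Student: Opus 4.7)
The proof is essentially an unfolding of definitions, so the plan is to chase both sides back to the explicit set $\{\phi(t) \mid \phi \in G,\ \phi(0)\in A\}$ and conclude equality. I would first treat the right-hand side, noting that $S_A = \pi^{-1}(A)\cap G = \{\phi \in G \mid \phi(0)\in A\}$. Since every $\phi \in G$ belongs to $C(\mathbb{R},X)$, the shifted map $\sigma(t,\phi)$ has $0\in \mathrm{dom}\,\sigma(t,\phi)$, so $\pi(\sigma(t,\phi))=\sigma(t,\phi)(0)=\phi(t)$ is well defined. Thus
\[
\pi\bigl(\sigma(t, S_A)\bigr) = \{\phi(t)\mid \phi\in G,\ \phi(0)\in A\}.
\]

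Next I would unfold the left-hand side, splitting into the two cases corresponding to the sign of $t$. For $t\geq 0$, the definition of the multivalued semigroup $V$ gives $V(t)A=\{\phi(t)\mid \phi\in G,\ \phi(0)\in A\}$ directly. For $t<0$, I would use the extension $V(-s)A:=\{\phi(-s)\mid \phi\in G,\ \phi(0)\in A\}$ introduced just before the lemma, with $s=-t>0$, to obtain the same description. Comparing with the right-hand side yields the desired equality.

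The only place where the shift invariance of $G$ enters is implicitly: it ensures that the class of maps $\sigma(t,\phi)$ with $\phi\in S_A$ remains within the relevant framework (and, in particular, that the extended definition of $V(-s)$ is consistent with treating $V(t)$ uniformly for all $t\in\mathbb{R}$). I do not foresee any genuine obstacle; the work is entirely a matter of keeping the definitions of $\pi$, $\sigma$, $S_A$, and $V(t)$ aligned, and in particular of verifying that the negative-time case reduces, via the extended definition, to the same set as the positive-time case.
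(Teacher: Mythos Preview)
Your proposal is correct and follows essentially the same approach as the paper: both arguments are straightforward unfoldings of the definitions of $V(t)$, $S_A$, $\sigma$, and $\pi$. The only cosmetic difference is that the paper verifies the two inclusions $V(t)A \subset \pi(\sigma(t,S_A))$ and $\pi(\sigma(t,S_A)) \subset V(t)A$ by separate element chases, whereas you reduce both sides directly to the common description $\{\phi(t)\mid \phi\in G,\ \phi(0)\in A\}$; neither argument makes essential use of the shift invariance hypothesis.
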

\begin{proof}
Let $x \in V(t) A.$ Then, there exists $\phi \in S$ with $\phi(t) = x$ and $\phi(0) \in A.$ Therefore, $\phi \in S_A$ and $x = \sigma(t, \phi)(0) \in \pi\qty(\sigma(t, S_A)).$ Conversely, if $x \in \pi\qty(\sigma(t, S_A)),$ there exists $\phi \in \sigma(t, S_A)$ with $\phi(0) = x.$ Because $\sigma(-t, \phi)(0) \in A$, it follows that $x = \sigma(-t, \phi)(t) \in V(t) A.$
\end{proof}
The next notion is essential for describing the relationship between the axiomatic theory of ODE and multivalued semigroup.
\begin{definition}
A map $\xi: \mathbb{R} \to X$ is called a \textbf{complete trajectory} if $\xi(t+s) \in V(t)\xi(s)$ for all $t \geq 0$ and $s \in \mathbb{R}.$
\end{definition}
We note that the complete trajectories can be concatenated:
\begin{lemma}\label{lem_swi_traj}
Let $\xi_1$ and $\xi_2$ be complete trajectories of a multivalued semigroup $V$ determined by a generalized semiflow. If $\xi_1(\tau) = \xi_2(\tau)$ for some $\tau \in \mathbb{R},$ then map $\xi$ defined by
\[
	\xi(t) = \begin{cases}
				\xi_1(t) &(t \leq \tau)\\
				\xi_2(t) &(t \geq \tau)
			\end{cases}
\]
is a complete trajectory.
\end{lemma}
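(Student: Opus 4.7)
The plan is to verify the defining condition $\xi(t+s) \in V(t)\xi(s)$ for all $t \geq 0$, $s \in \mathbb{R}$ by a case split on the position of $s$ and $s+t$ relative to the gluing time $\tau$. First I would observe that $\xi$ is well defined at $t = \tau$ since $\xi_1(\tau) = \xi_2(\tau)$ by hypothesis.

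In the easy cases, either $\tau \leq s$ (so $\xi$ agrees with $\xi_2$ on $[s, s+t]$) or $s + t \leq \tau$ (so $\xi$ agrees with $\xi_1$ on $[s, s+t]$), and the desired inclusion follows immediately from the fact that $\xi_1$ or $\xi_2$ is itself a complete trajectory. The interesting case is $s \leq \tau \leq s + t$, where I would split $t = t_1 + t_2$ with $t_1 := \tau - s \geq 0$ and $t_2 := (s+t) - \tau \geq 0$. Since $\xi_1$ is a complete trajectory, $\xi_1(\tau) = \xi_1(s + t_1) \in V(t_1)\xi_1(s)$, so by the definition of $V$ there is a $\phi_1 \in G$ with $\phi_1(0) = \xi_1(s)$ and $\phi_1(t_1) = \xi_1(\tau)$. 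Similarly, from $\xi_2(s+t) = \xi_2(\tau + t_2) \in V(t_2)\xi_2(\tau)$, there is a $\phi_2 \in G$ with $\phi_2(0) = \xi_2(\tau)$ and $\phi_2(t_2) = \xi_2(s+t)$.

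Now the key point is that $\phi_1(t_1) = \xi_1(\tau) = \xi_2(\tau) = \phi_2(0)$, so property (3) of generalized semiflow (Definition \ref{def_gen}) produces a concatenation $\theta \in G$ that equals $\phi_1$ on $[0, t_1]$ and $\phi_2(\,\cdot\, - t_1)$ on $[t_1, \infty)$. Then $\theta(0) = \xi_1(s) = \xi(s)$ and $\theta(t) = \theta(t_1 + t_2) = \phi_2(t_2) = \xi_2(s+t) = \xi(s+t)$, which gives $\xi(s+t) \in V(t)\xi(s)$ as required. The main obstacle is really just this mixed case; once one unpacks the definition of $V$ into witnesses $\phi_1, \phi_2 \in G$, the concatenation axiom for generalized semiflows delivers the single $G$-trajectory needed to realize $\xi(s+t) \in V(t)\xi(s)$.
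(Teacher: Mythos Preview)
Your argument is correct. The case split and the use of the concatenation axiom (3) of Definition \ref{def_gen} to manufacture a single $\theta \in G$ witnessing $\xi(s+t) \in V(t)\xi(s)$ in the mixed case is sound.

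The paper's proof handles the same mixed case differently: rather than descending to individual trajectories in $G$, it stays at the level of the multivalued operators and uses the semigroup identity $V(a)V(b) = V(a+b)$, writing
\[
\xi_2(s+t) \in V(s+t-\tau)\,\xi_2(\tau) = V(s+t-\tau)\,\xi_1(\tau) \subset V(s+t-\tau)\,V(\tau-s)\,\xi_1(s) = V(t)\,\xi_1(s).
\]
So the paper invokes the already-established semigroup property of $V$, while you effectively reprove the relevant instance of that property by hand via axiom (3). Your route is more self-contained (it goes straight from the axioms of a generalized semiflow), whereas the paper's route is a bit shorter once the semigroup identity is available. Both are valid and amount to the same underlying mechanism.
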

\begin{proof}
The only problematic situation is when $t>0$ and $s < \tau$ satisfy $s+t \geq \tau$. In this case, we have
\[
	\begin{aligned}
		\xi_2(s+t) &\in V(s+t-\tau) \xi_2(\tau)\\
				&=V(s+t-\tau) \xi_1(\tau)\\
				&\subset V(s+t-\tau) V(\tau - s)\xi_1(s)\\
				&=V(t)\xi_1(s),
	\end{aligned}
\]
which implies $\xi(s+t) \in V(t) \xi(s).$
\end{proof}
The next lemma follows immediately from Lemma 5 in \cite{caraballo2003comparison}:
\begin{lemma}\label{lem_fwd}
Let $V(t)$ be a multivalued semigroup determined by a generalized semiflow $G \subset C(\mathbb{R}_{\geq 0},X).$ If $\xi$ is a continuous complete trajectory of $V,$ there exists $\phi \in G$ with $\xi(t) = \phi(t)$ for all $t \geq 0$.
\end{lemma}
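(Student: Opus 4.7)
The plan is to adapt the dyadic approximation argument from Caraballo et al. Fix $T > 0$ and consider the partition $t_k^n := kT/2^n$ of $[0,T]$ for $k = 0, 1, \ldots, 2^n$. Since $\xi$ is a complete trajectory, $\xi(t_k^n) \in V(T/2^n)\,\xi(t_{k-1}^n)$ for every $k$; by the definition of $V$ this yields, on each subinterval, an element of $G$ joining $\xi(t_{k-1}^n)$ to $\xi(t_k^n)$ in time $T/2^n$. I would then apply condition (3) of Definition \ref{def_gen} iteratively to concatenate these $2^n$ local pieces (the last one extended past $T$) into a single $\phi_n^T \in G$ defined on $\mathbb{R}_{\geq 0}$ with $\phi_n^T(t_k^n) = \xi(t_k^n)$ for every $k$.

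Next, since $\phi_n^T(0) = \xi(0)$ for all $n$, condition (4) of Definition \ref{def_gen} produces a subsequence converging pointwise to some $\phi^T \in G$. Any dyadic rational $q \in [0,T]$ is a partition point for all sufficiently large $n$, so $\phi_n^T(q) = \xi(q)$ eventually, and hence $\phi^T(q) = \xi(q)$. Because $G \subset C(\mathbb{R}_{\geq 0}, X)$, both $\phi^T$ and $\xi$ are continuous, so agreement on the dense set of dyadic rationals forces $\phi^T \equiv \xi$ on $[0,T]$.

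Finally, I would let $T$ range over $\mathbb{N}$ to obtain a sequence $\{\phi^m\}_{m \in \mathbb{N}} \subset G$ with $\phi^m|_{[0,m]} = \xi|_{[0,m]}$. Since $\phi^m(0) = \xi(0)$, one more application of condition (4) extracts a subsequence converging pointwise to some $\phi \in G$ with $\phi(0) = \xi(0)$, and on each fixed $t \geq 0$ the values $\phi^m(t)$ are eventually equal to $\xi(t)$, so $\phi \equiv \xi$ on $\mathbb{R}_{\geq 0}$.

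The main obstacle is coordinating the three substantive axioms of a generalized semiflow: the complete trajectory hypothesis only guarantees existence of $G$-pieces over short intervals, so the switching axiom (3) is indispensable for assembling them into one element of $G$, and the compactness axiom (4) is what keeps the limiting object inside $G$ rather than merely in $C(\mathbb{R}_{\geq 0}, X)$. Continuity of elements of $G$ then does the work of promoting agreement on the dense dyadic set to agreement everywhere, which is why the hypothesis $G \subset C(\mathbb{R}_{\geq 0}, X)$ is used crucially.
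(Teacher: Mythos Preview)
Your argument is correct: the dyadic concatenation via axiom (3), the limit via axiom (4), and the density-plus-continuity step all go through as you describe. The paper itself gives no proof here but merely cites Lemma~5 of \cite{caraballo2003comparison}, which is exactly the source you say you are adapting, so your proposal is precisely the paper's approach made explicit.
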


The next result can be used to apply the axiomatic theory of ODE to the set of all continuous complete trajectories.
\begin{theorem}[Main Theorem C]\label{thm_g2y}
Let $V(t)$ be a multivalued semigroup determined by a generalized semiflow $G \subset C(\mathbb{R}_{\geq 0},X)$ on a compact metric space $X$.
Then, the set $\mathcal{K}$ of all continuous complete trajectories is nonempty and shift-invariant. As a solution space on $\pi\qty(\mathcal{K})$, $\mathcal{K}$ satisfies existence, compactness, and switching axioms.
\end{theorem}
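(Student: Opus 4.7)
The plan is to verify the four assertions in order of increasing difficulty. The existence axiom on $\{0\}\times\pi(\mathcal{K})$ is tautological from the definition of $\pi$. Shift-invariance is a direct check: if $\xi\in\mathcal{K}$ and $\eta(s):=\xi(s+\tau)$, then for every $t\geq 0$ we have $\eta(s+t)=\xi(s+t+\tau)\in V(t)\xi(s+\tau)=V(t)\eta(s)$, so $\eta\in\mathcal{K}$. The switching axiom is immediate from Lemma \ref{lem_swi_traj}, since continuity of the concatenation at the junction point is automatic when the two continuous pieces agree there.

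For nonemptiness, I would exploit compactness of $X$ to construct a complete trajectory backward in time. Property (4) of Definition \ref{def_gen}, applied to representatives $\phi_j\in G$ with varying $\phi_j(0)\in X$ and $\phi_j(n)$ converging to a limit point, implies that each $V(n)X$ is a closed nonempty subset of $X$. The decreasing intersection $K:=\bigcap_{n\geq 0}V(n)X$ is therefore nonempty by compactness. For any $x^{*}\in K$ and each $n$ there is $\phi_n\in G$ with $\phi_n(n)=x^{*}$; the shifted maps $\tilde\phi_n^m(t):=\phi_n(t+n-m)\in G$ (defined for $n\geq m$) then satisfy $\tilde\phi_n^m(0)=\phi_n(n-m)\in X$. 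A diagonal extraction using property (4) level by level in $m$ yields limits $\tilde\phi^m\in G$, and the prescription $\xi(s):=\tilde\phi^m(s+m)$ for $s\geq -m$ gives a consistent, continuous map $\xi:\mathbb{R}\to X$ whose restriction to each $[-m,\infty)$ agrees with a shift of an element of $G$. The complete-trajectory condition $\xi(s+t)\in V(t)\xi(s)$ is then immediate.

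For the compactness axiom, Corollary \ref{cor_cpt} reduces the task to showing $\mathcal{K}$ is compact in $C(\mathbb{R},X)$; equivalently, by Theorem \ref{cptness}, that whenever $\xi_n\in\mathcal{K}$ satisfy $(t_n,\xi_n(t_n))\to(t,x)$, some subsequence converges in the compact-open topology to a $\xi\in\mathcal{K}$ with $\xi(t)=x$. The pattern mirrors the nonemptiness construction: for each $m\in\mathbb{N}$, Lemma \ref{lem_fwd} supplies $\phi_n^m\in G$ with $\phi_n^m(s)=\xi_n(s-m)$ for $s\geq 0$, whose initial values $\phi_n^m(0)=\xi_n(-m)$ lie in the compact $X$; property (4) followed by diagonalization in $m$ delivers a pointwise limit $\xi$ that is a continuous complete trajectory, and continuity of evaluation then gives $\xi(t)=x$. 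The main obstacle is upgrading this pointwise convergence to convergence in the compact-open topology, since property (4) as stated delivers only pointwise limits. I expect to overcome this via a strengthening of (4) available in the present setting—continuous elements of $G$, compact phase space $X$, and upper semicontinuity of each $V(t)$ (itself a direct consequence of (4))—which yields equicontinuity of the extracted subsequence on each compact interval by a standard Arzelà–Ascoli / upper-semicontinuity argument in Ball's framework, forcing convergence in $C(\mathbb{R},X)$ along the diagonal subsequence and completing the verification of the compactness axiom.
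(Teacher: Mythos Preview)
Your handling of shift-invariance, the existence axiom, and the switching axiom matches the paper exactly. The two substantive pieces are nonemptiness and compactness.

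For nonemptiness you take a genuinely different route. The paper picks any $\phi\in G$, chooses $y_0\in\omega(\phi)$, and then extends backward one unit at a time using property~(4) applied to the shifts $\phi^{t_n-k}$ of that \emph{single} trajectory. You instead pick $x^*\in\bigcap_{n\geq 0}V(n)X$ and use a \emph{different} $\phi_n\in G$ with $\phi_n(n)=x^*$ for each $n$, then diagonalize over the shifts $\phi_n^{\,n-m}$. Both are correct diagonal constructions; yours is arguably tidier since it avoids introducing the $\omega$-limit set, while the paper's version makes it transparent that the complete trajectory lives in $\omega(\phi)$.

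For compactness your skeleton is the same as the paper's: reduce via Corollary~\ref{cor_cpt} to compactness of $\mathcal{K}$, then run a level-by-level diagonal argument using Lemma~\ref{lem_fwd} and property~(4). You correctly flag that property~(4) gives only pointwise convergence and that this must be upgraded to compact-uniform convergence. However, your proposed mechanism is off: upper semicontinuity of $x\mapsto V(t)x$ controls variation in the \emph{spatial} argument and does not by itself yield equicontinuity of the trajectories in the \emph{time} variable, so an Arzel\`a--Ascoli argument does not go through as stated. The paper sidesteps this by citing Theorem~2.3 of \cite{ball1997continuity}, which asserts directly that for a generalized semiflow with continuous elements the convergence in axiom~(4) is automatically uniform on compact time intervals; the proof there uses the concatenation axiom and continuity of the limit rather than equicontinuity of the approximating sequence. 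Replacing your Arzel\`a--Ascoli sketch with an appeal to that theorem closes the argument.
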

\begin{proof}
First, we show that $\mathcal{K}$ is nonempty. For a point $x_0 \in X,$ there exists $\phi \in G$ with 
$\phi(0) = x_0.$ The set
\[ 
	\omega(\phi) := \bigcap_{t>0} \overline{\{\phi(s) \mid s > t\}}
\]
is nonempty because it is an intersection of compact sets. We fix a point $y_0 \in \omega(\phi).$ By the definition of $\omega(\phi),$ we construct a sequence $t_n >0$ with $t_n \to \infty$ and $\phi(t_n) \to y_0$ as $n \to \infty$.
From condition (4) in Definition \ref{def_gen}, there is a subsequence of $\{\phi^{t_n}\}$ that converges pointwise to $\psi \in G$ with $\psi(0) = y_0$. We assume that $\{\phi^{t_n}\}$ converges to $\psi$ pointwise.

Now, we extend $\psi$ continuously onto $[-1,\infty)$ so that the extension $\psi_1$ satisfies
\[
	\psi_1(s+t) \in V(t) \psi_1(s)
\]
for all $s \in [-1,\infty)$ and $t \geq 0.$ For sufficiently large $n$, we have $t_n -1 > 0.$ Therefore, a sequence $\{\phi(t_n -1)\}_n$ can be defined and has a convergent subsequence $\{\phi(t_{n_i} -1)\}_i$ because $X$ is compact. If we set $\phi(t_{n_i} -1) \to y_1$ as $i \to \infty,$ we may find a sub-subsequence $\{\phi^{t_{n_{i_j}}-1}\}_j$ with $\phi^{t_{n_{i_j}}-1} \to \tilde\psi_1$ as $i \to \infty,$ where $\tilde\psi_1 \in G$ and $\tilde\psi_1(0) = y_1$. For all $s \geq 1,$
\[
\begin{aligned}
	\tilde\psi_1 (s) &= \lim_{j \to \infty} \phi\qty((t_{n_{i_j}}-1) + s -1 + 1) \\
				&= \lim_{j \to \infty} \phi\qty(t_{n_{i_j}} + s -1 )\\
				&= \lim_{j \to \infty} \phi^{t_{n_{i_j}}} \qty(s -1 )\\
				&= \psi(s-1).
\end{aligned}
\]
Therefore, map $\psi_1(s) := \tilde\psi_1(s+1)$ has the required properties.

If we replace $t_n,$ $y_0$ and $\psi$ with $t_{n_{i_j}},$ $y_1$ and $\psi_1$, respectively, we can extend $\psi_1$ continuously onto $[-2,\infty)$. Similarly, we obtain a sequence of continuous maps $\psi_n :[-n, \infty)$ such that $\psi_{n+1} = \psi_n$ on $[-n,\infty).$ Then, a continuous map $\xi: \mathbb{R} \to X$ can be defined as
\[
	\xi(s) = \psi_n(s)
\]
if $s \geq -n.$ This $\xi$ provides a continuous complete trajectory of $V$.

The shift invariance of $\mathcal{K}$ immediately follows by definition. 

As noted in \cite{moussa2017invariant}, $\mathcal{K}$ is compact. Here we show this property for completeness. Let $\xi_n \in \mathcal{K}$ be the sequence. Because $X$ is compact, we assume that $\xi_n(-1)$ converges to point $x_1 \in X$ by renaming if necessary. By Lemma \ref{lem_fwd}, there is a subsequence $\{\xi_{n^1_i}\}_i$ and $\phi_1 \in G$ such that the restriction of $\sigma(-1,\xi_{n^1_i})$ to $[0,\infty)$ converges to $\phi_1$. This convergence is in the topology of compact-uniform convergence according to Theorem 2.3 in \cite{ball1997continuity}. If we apply the same argument to $\{\xi_{n^1_i}(-2)\}_i,$ we obtain a convergent subsequence $\{\xi_{n^2_i}\}_i$ and $\phi_2 \in G$ such that the restriction of $\sigma(-2,\xi_{n^2_i})$ to $[0,\infty)$ converges to $\phi_2$. For $-1 \leq s,$ we have
\[
	\phi_2(s+2) = \lim_{i \to \infty} \xi_{n^2_i}(s) = \lim_{i \to \infty} \xi_{n^2_i}(s+1 -1) = \phi_1(s+1).
\]
Similarly, for each $k \in\mathbb{N}$, we can construct subsequences $\{\xi_{n^k_i}\}_i$ and $\phi_k \in G$ such that the restriction of $\sigma(-k,\xi_{n^k_i})$ to $[0,\infty)$ converges to $\phi_k$ and $\phi_k(s+k) = \phi_m(s+m)$ for all $m \leq k$ and $s \in [-m,\infty).$ Then, a continuous complete trajectory $\xi: \mathbb{R} \to X$ can be defined by
\[
	\xi(s) = \phi_n(s+n)
\]
if $s \geq -n.$ From the compact uniform convergence of each $\{\sigma(-k,\xi_{n^k_i})\}_i$, we may find $i_k$ with
\[
	\sup_{s \in [-k, k]} d\qty(\xi_{n^k_{i_k}}(s), \xi(s)) < \frac{1}{k}
\]
for each $k \in \mathbb{N}$, showing that a subsequence of $\{\xi_n\}$ converges to a continuous complete trajectory in the topology of $C\qty(\mathbb{R}, X)$.

For $\mathcal{K}$, the existence axiom is obviously satisfied on $\pi\qty(\mathcal{K})$. Because of the compactness of $\mathcal{K}$, $\pi\qty(\mathcal{K})$ is compact. Therefore, it satisfies the compactness axiom by Corollary \ref{cor_cpt}. The switching axiom is satisfied by Lemma \ref{lem_swi_traj}. 
\end{proof}

By Theorem \ref{thm_y2g}, a sufficiently well-behaved shift-invariant subset of $C\qty(\mathbb{R}, X)$ can be regarded as a generalized semiflow, and hence, defines a multivalued semigroup. According to Theorem \ref{thm_g2y}, the set of all continuous complete trajectories satisfies the same axioms on $\pi(\mathcal{K})$. In fact, they are the same in this case.
\begin{theorem}
Let $X$ be a compact metric space.
If a shift-invariant set $S$ satisfies the compactness, existence, and switching axioms on $\mathbb{R}_{\geq 0}\times X$, $S$ is the set of all continuous complete trajectories of the multivalued semigroup determined by $S$.
\end{theorem}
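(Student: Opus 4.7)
The plan is to establish $S = \mathcal{K}$ by two inclusions. I view $S$ as a generalized semiflow $G$ via Theorem \ref{thm_y2g} (by restricting each map to $\mathbb{R}_{\geq 0}$) and let $V$ denote the multivalued semigroup $G$ determines. For the forward inclusion $S \subset \mathcal{K}$, take $\phi \in S$, which is continuous on $\mathbb{R}$ by the simplifying assumption, and fix $s \in \mathbb{R}$ and $t \geq 0$. Shift-invariance of $S$ gives $\sigma(s, \phi) \in S$, whose restriction to $\mathbb{R}_{\geq 0}$ lies in $G$ with $\sigma(s,\phi)(0) = \phi(s)$, so by the definition of $V$ we have $\phi(s+t) = \sigma(s,\phi)(t) \in V(t)\phi(s)$, showing $\phi$ is a continuous complete trajectory.

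For the reverse inclusion $\mathcal{K} \subset S$, let $\xi \in \mathcal{K}$. For each $n \in \mathbb{N}$ the translate $r \mapsto \xi(r-n)$ is also a continuous complete trajectory (a routine check from the definition), so Lemma \ref{lem_fwd} supplies $\phi_n \in G$ with $\phi_n(t) = \xi(t-n)$ for all $t \geq 0$. Since every element of $G$ is the restriction of some element of $S$, I pick $\tilde\phi_n \in S$ restricting to $\phi_n$ and set $\psi_n := \sigma(-n, \tilde\phi_n) \in S$; by construction $\psi_n(t) = \tilde\phi_n(t+n) = \xi(t)$ for all $t \geq -n$. Since $\psi_n(0) = \xi(0)$ for every $n$, Theorem \ref{cptness} applied to the constant data $(0, \xi(0))$ produces a subsequence $\{\psi_{n_i}\}$ converging in $C_s(\mathbb{R}, X)$ to some $\psi \in S$. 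For each fixed $t \in \mathbb{R}$, once $n_i > |t|$ we have $\psi_{n_i}(t) = \xi(t)$, and because convergence in $C_s$ is uniform on compacta, passing to the limit forces $\psi(t) = \xi(t)$. Thus $\xi = \psi \in S$.

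The forward direction is essentially a definition chase; the substantive step is the reverse inclusion, where I must upgrade the one-sided information supplied by Lemma \ref{lem_fwd} to a globally defined element of $S$. I expect the main obstacle to lie in arranging the approximating sequence so that the compactness axiom can be invoked at a single point—this is why I shift $\tilde\phi_n$ backward by $n$, ensuring that all the $\psi_n$ pass through the common point $\xi(0)$ while simultaneously agreeing with $\xi$ on longer and longer left-intervals. Notably, the switching axiom is not invoked directly in this argument; it is already absorbed into Theorem \ref{thm_y2g} (producing $G$) and into Lemma \ref{lem_swi_traj} used to construct $\mathcal{K}$, so the final step reduces cleanly to compactness and shift-invariance.
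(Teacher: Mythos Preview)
Your argument is correct and follows essentially the same route as the paper: both directions proceed identically, with the reverse inclusion obtained by applying Lemma~\ref{lem_fwd} to the shifted trajectory $\sigma(-n,\xi)$, then shifting back and extracting a convergent subsequence whose limit must be $\xi$. Two small remarks: there is a sign slip in your definition of $\psi_n$ (with the paper's convention $\sigma(t,\phi)(x)=\phi(x+t)$ you want $\psi_n=\sigma(n,\tilde\phi_n)$, which is what your subsequent formula $\psi_n(t)=\tilde\phi_n(t+n)$ actually uses); and where you invoke Theorem~\ref{cptness} at the single point $(0,\xi(0))$, the paper instead uses Corollary~\ref{cor_cpt} to conclude $S$ itself is compact---both are equally valid here since $X$ is compact.
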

\begin{proof}
Let $\mathcal{K}$ be the set of all the continuous complete trajectories of the multivalued semigroup determined by $S$.
By definition, $S \subset \mathcal{K}$. Conversely, let $\xi \in \mathcal{K}.$ For each $n \in \mathbb{N},$ we can find $\phi_n \in S$ such that it coincides with $\sigma(-n,\xi)$ on $[0,\infty)$ by Lemma \ref{lem_fwd}. For $s \in [-n, \infty)$, we have
\[
	\xi(s) = \xi(s+n -n) = \sigma(-n, \xi)(s+n) =\phi_n(s+n).
\]
Because $S$ is compact by Corollary \ref{cor_cpt}, we can construct a sequence $\{\sigma\qty(n_i,\phi_{n_i})\}_i$ that converges to $\psi \in S.$ Then, for all $t \in \mathbb{R},$ we have that
\[
\begin{aligned}
	\psi(t) &= \lim_{i\to \infty} \phi_{n_i}(t + n_i)\\
			&= \xi(t).
\end{aligned}
\]
Therefore, we have $\xi = \psi \in S.$
\end{proof}
\section{Invariant measures}\label{inv_meas}
In view of Theorem \ref{thm_cor}, every property regarding invariance induces a generalized counterpart through the evaluation-at-zero map. Furthermore, Theorem \ref{thm_g2y} implies that the set of all continuous complete trajectories is a solution set with good properties. In this section, we use these results as hints for defining invariant measures.

First, we recall the definition of invariant measures for a multivalued semigroup determined by a generalized semiflow, introduced in \cite{moussa2017invariant}. This concept can be formulated in several ways. Here, we introduce it in a form that is suitable for later discussion.
\begin{definition}[\cite{moussa2017invariant}]
Let $\{V(t)\}_{t \in \mathbb{R}_{\geq 0}}$ be a multivalued semigroup determined by a generalized semi flow $G$ that has continuous maps as trajectories. Let $\{V(t)\}_{t \in \mathbb{R}_{\geq 0}}$ have a continuous complete trajectory. Then, a Borel probability measure $\mu$ on $X$ is \textbf{invariant} if any one of the following conditions are satisfied:
\begin{enumerate}
\item For each Borel subset $A \subset X$ and $t \geq 0,$ $\mu(A) \leq \mu(V(t)^{-1} A).$
\item There exists a Borel probability measure $\nu$ on $C(\mathbb{R}, X)$, such that $\nu$ is invariant for the shift on $C(\mathbb{R},X)$, has support in the set of complete trajectories, and $\mu = (\pi_0)_*\nu,$ where $\pi_0$ is the evaluation at $0.$
\end{enumerate}
\end{definition}

Because the set of all continuous complete trajectories, which contains the support of invariant measures, is a solution set with good properties in terms of Yorke's axioms, we can generalize the definition above.
Considering that Yorke's formalism does not assume concatenation by default, we define the notion of invariant measures for a shift-invariant subset of $C(\mathbb{R}, X)$ as follows:
\begin{definition}\label{def_inv}
For a shift-invariant subset $S \subset C(\mathbb{R}, X),$ a Borel probability measure $\mu$ on $X$ is \textbf{invariant} if there exists a Borel probability measure $\nu$ on $S$ such that $\nu$ is invariant for the shift on $S$ and $\mu = (\pi)_*\nu$. 
\end{definition}

From Theorem \ref{thm_g2y}, we see that this definition generalizes the definition of Moussa. In other words, 
an invariant measure for the multivalued semigroup determined by a generalized semiflow is an invariant measure for $\mathcal{K}$ in the sense of Definition \ref{def_inv}.
\begin{example}
Let us consider the differential inclusion of $S^1$ defined by
\[
	\dot x \in [1,2].
\]
In this case, the Lebesgue measure is invariant because
\[
	A - t\subset V(t)^{-1} A 
\]
for all $A \subset S^1.$
\end{example}
The existence of an invariant measure can be established immediately using the classical Krylov-Bogoliubov theorem.
\begin{theorem}
Let $X$ be a compact metric space:
If $S \subset C(\mathbb{R}, X)$ satisfies the compactness axiom and is shift invariant, there exists an invariant measure for $S$ on $X.$
\end{theorem}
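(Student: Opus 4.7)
The plan is to reduce the statement to the classical Krylov--Bogoliubov theorem applied to the shift flow $\sigma$ restricted to $S$, and then push the resulting measure forward through $\pi$.

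First I would invoke Corollary \ref{cor_cpt} to conclude that $S$, being a subset of $C(\mathbb{R},X)$ satisfying the compactness axiom over a compact $X$, is itself a compact metric space (the metric being inherited from the compact-open topology, which is metrizable since $X$ is separable metric). Assuming $S$ is nonempty, the restriction of the shift flow $\sigma : \mathbb{R} \times S \to S$ is then a continuous $\mathbb{R}$-action on a nonempty compact metric space, because $S$ is shift-invariant and $\sigma$ is jointly continuous on $\mathbb{R} \times C_p(\mathbb{R},X)$.

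Next I would apply the Krylov--Bogoliubov construction to this flow: fix any $\phi_0 \in S$, form the averaged measures
\[
  \nu_T := \frac{1}{T}\int_{0}^{T} \delta_{\sigma(t, \phi_0)}\, dt
\]
on $S$, and extract a weak-$*$ accumulation point $\nu$ as $T \to \infty$, which exists by the Banach--Alaoglu theorem since the space of Borel probability measures on the compact metric space $S$ is weak-$*$ compact. A standard computation shows that $\nu$ is a Borel probability measure on $S$ that is invariant under every $\sigma(s,\cdot)$, $s \in \mathbb{R}$.

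Finally, since $\pi : S \to X$ is continuous (it is the composition of $\phi \mapsto (0,\phi)$ with the evaluation map $e$, which is continuous by Lemma \ref{lem_eval}), the pushforward $\mu := \pi_* \nu$ is a well-defined Borel probability measure on $X$. By Definition \ref{def_inv}, $\mu$ is an invariant measure for $S$, completing the proof. The only genuine subtlety is confirming that Krylov--Bogoliubov applies in the continuous-time setting on the abstract compact metric space $S$; this is standard but relies crucially on the compactness of $S$ provided by Corollary \ref{cor_cpt}, which is where the compactness axiom enters. No other obstacle is expected.
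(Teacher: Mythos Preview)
Your proposal is correct and follows exactly the paper's approach: use the compactness of $S$ (via Corollary \ref{cor_cpt}) to apply Krylov--Bogoliubov to the shift flow $(\sigma,S)$, obtaining a shift-invariant $\nu$, and then push forward by $\pi$ to get $\mu=\pi_*\nu$. Your version simply spells out the details that the paper leaves implicit.
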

\begin{proof}
Because $(S, \sigma)$ is a continuous flow in compact space, there exists an invariant measure $\nu$. Then, measure $\mu := \pi_* \nu$ is invariant in the sense of Definition \ref{def_inv}.
\end{proof}
The next property enables us to generalize the results for invariant measures for a multivalued semigroup.
\begin{lemma}\label{lem_inv_ineq}
Let $\mu$ be an invariant measure of a shift-invariant subset $S \subset C(\mathbb{R}, X)$. For all $t \in \mathbb{R}$ and Borel set $A \subset X$, we have 
\[\mu(A) \leq \mu(V(t)^{-1} A).\]
\end{lemma}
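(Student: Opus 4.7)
The plan is to pull the entire question back from the phase space $X$ to the solution space $S$ using $\mu = \pi_* \nu$, exploit shift invariance of $\nu$ to move from time $0$ to time $t$, and then use a simple tautological inclusion to compare the image under $\pi$ with $V(t)^{-1}A$.

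First I would unpack Definition \ref{def_inv} to produce a shift-invariant Borel probability $\nu$ on $S$ with $\mu = \pi_{*}\nu$, and write $T_t\phi := \sigma(t,\phi)$ for the time-$t$ shift, so that invariance of $\nu$ reads $\nu(T_t^{-1} B) = \nu(B)$ for every Borel set $B \subset S$. I would then rewrite the left-hand side of the inequality as
\[
	\mu(A) = \nu\qty(\pi^{-1}(A)) = \nu(S_A).
\]
The key computation is the identity
\[
	T_t^{-1}(S_A) = \{\phi \in S : \sigma(t,\phi)(0) \in A\} = \{\phi \in S : \phi(t) \in A\},
\]
so shift invariance gives $\mu(A) = \nu(S_A) = \nu(\{\phi \in S : \phi(t) \in A\})$.

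Next I would invoke the characterization of $V(t)^{-1}$ in Lemma \ref{lem_smg}, namely $V(t)^{-1}A = \{x \in X : \exists \phi \in S,\ \phi(0) = x,\ \phi(t) \in A\}$. If $\phi \in S$ satisfies $\phi(t) \in A$, then $\phi$ itself witnesses $\phi(0) \in V(t)^{-1} A$, which yields the inclusion
\[
	\{\phi \in S : \phi(t) \in A\} \subset \pi^{-1}\qty(V(t)^{-1} A).
\]
Taking $\nu$-measures and combining with the previous display gives
\[
	\mu(A) = \nu\qty(\{\phi \in S : \phi(t) \in A\}) \leq \nu\qty(\pi^{-1}(V(t)^{-1} A)) = \mu\qty(V(t)^{-1}A),
\]
which is exactly the claim.

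The main point that deserves care is the measurability of $V(t)^{-1}A$, since it is defined as the $\pi$-image of the Borel set $\{\phi \in S : \phi(t) \in A\}$ and need not be Borel in general. This is handled by noting that continuous images of Borel sets are analytic, hence universally measurable, so $V(t)^{-1}A$ is measurable with respect to the completion of $\mu$; alternatively one may interpret $\mu(V(t)^{-1}A)$ as outer measure, in which case the chain of inequalities above is valid without any completion argument. This is the only real subtlety; the rest is a bookkeeping exercise in the definitions of $\pi$, $\sigma$, and $V(t)^{-1}$.
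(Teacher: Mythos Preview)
Your argument is correct and is essentially the same as the paper's: the paper writes $V(t)^{-1}A = \pi(\sigma(-t,S_A))$ via Lemmas \ref{lem_smg} and \ref{lem_ss} and then uses the tautological inclusion $\sigma(-t,S_A)\subset \pi^{-1}\bigl(\pi(\sigma(-t,S_A))\bigr)$, which is precisely your inclusion once one notes that $T_t^{-1}(S_A)=\sigma(-t,S_A)$ and $\pi(\sigma(-t,S_A))=V(t)^{-1}A$. Your additional remark on the measurability of $V(t)^{-1}A$ is a point the paper leaves implicit.
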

\begin{proof}
 Let $\nu$ be invariant for the shift on $S$ and satisfy $\mu = (\pi)_*\nu$. For all $t \in \mathbb{R}$ and the Borel set $A \subset X$, we have
 \[
 	V(t)^{-1} A = V(-t) A = \pi\qty(\sigma(-t,S_A)).
 \]
 using Lemmas \ref{lem_smg} and \ref{lem_ss}. Then, we have
 \[
 	\begin{aligned}
		\mu(A) &= \nu(S_A)\\
				&= \nu\qty(\sigma(-t, S_A))\\
				&\leq \nu\qty(\pi^{-1}\qty(\pi\qty(\sigma(-t, S_A))))\\
				&=\mu\qty(V(t)^{-1} A).
	\end{aligned}
 \]
 This yields the required inequality.
\end{proof}
Now, we consider some generalizations of the classical results for the invariant measure. Lemma \ref{lem_inv_ineq} enables us to reuse existing proofs.
\begin{theorem}[Poincar\'e recurrence theorem, cf. Theorem 2.2 in \cite{aubin1991poincare}]
Let $\mu$ be an invariant measure of a shift-invariant subset $S \subset C(\mathbb{R}, X)$. Then, for all Borel subset $B \subset X,$ we have
\[
	\mu\qty(B \cap B_{\infty}) = \mu(B),
\]
where
\[
	B_\infty := \bigcap_{N=0}^\infty \bigcup_{n= N}^\infty V(n)^{-1}B.
\]
In particular, $\mu$-almost every point in $B$ returns to $B$ infinitely often. 
\end{theorem}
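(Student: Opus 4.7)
The plan is to lift the problem to the solution space, where $\sigma$ is a genuine $\nu$-preserving continuous flow, and apply the classical Poincar\'e recurrence theorem there. Let $\nu$ be the shift-invariant Borel probability measure on $S$ with $\mu = \pi_*\nu$ provided by Definition \ref{def_inv}. The time-one shift $T := \sigma(1,\cdot):S\to S$ is a Borel measurable $\nu$-preserving transformation, so the classical discrete Poincar\'e recurrence theorem applies to it.

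My first step is to apply that theorem to the Borel set $D := S_B = \pi^{-1}(B)\cap S$, which is Borel in $S$ by continuity of $\pi$. The conclusion is that $\nu$-almost every $\phi\in S_B$ satisfies $T^n\phi \in S_B$ for infinitely many $n \in \mathbb{N}$. Since $T^n\phi = \sigma(n,\phi)$ and $\sigma(n,\phi)\in S_B$ precisely when $\phi(n)\in B$, this means that $\nu$-a.e.\ $\phi\in S_B$ satisfies $\phi(n)\in B$ for infinitely many $n$. Writing
\[
  E := \{\phi \in S_B : \phi(n) \in B \text{ for infinitely many } n \in \mathbb{N}\},
\]
which is a Borel subset of $S$, we have $\nu(E) = \nu(S_B) = \mu(B)$.

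The bridge back to the phase space is the elementary observation that for any $\phi \in S$ with $\phi(0) = x$ one has $\phi(n) \in V(n)x$ by the very definition of the multivalued semigroup, so $\phi(n)\in B$ forces $x \in V(n)^{-1}B$; infinitely many such $n$ then put $x$ into $B_\infty$. Hence $\pi(E) \subset B \cap B_\infty$, i.e.\ $E \subset \pi^{-1}(B \cap B_\infty)$, and pushing forward by $\pi$ yields
\[
  \mu(B) = \nu(E) \leq \nu\bigl(\pi^{-1}(B\cap B_\infty)\bigr) = \mu(B\cap B_\infty).
\]
The reverse inequality $\mu(B \cap B_\infty) \leq \mu(B)$ is immediate, so equality holds.

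The main subtlety, rather than a real obstacle, is the measurability of $B_\infty$: the sets $V(n)^{-1}B$ need not be Borel a priori because $V(n)$ is multivalued. This can be handled by interpreting $\mu(B \cap B_\infty)$ through its preimage $\pi^{-1}(B \cap B_\infty)$ in $S$, which is manifestly $\nu$-measurable, or equivalently by passing to the $\mu$-completion, in which the analytic set $B_\infty$ is measurable. Once this is acknowledged, the entire argument reduces to the classical Poincar\'e recurrence theorem applied to $(\sigma, S, \nu)$, together with the trivial dictionary $\phi(n) \in V(n)\pi(\phi)$ between recurrence in $S$ and recurrence in the phase space.
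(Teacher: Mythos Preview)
Your argument is correct and takes a genuinely different route from the paper's. The paper does not lift to the solution space here; instead it invokes Lemma~\ref{lem_inv_ineq} (the inequality $\mu(A)\leq\mu(V(t)^{-1}A)$) together with the identity $V(n)^{-1}=V(-n)=V(1)^{-n}$, and then cites Aubin's Poincar\'e recurrence theorem for set-valued maps applied to $F=V(1)$. In other words, the paper stays in the phase space and relies on a ready-made multivalued recurrence result, whereas you exploit directly the very definition of an invariant measure as a pushforward $\mu=\pi_*\nu$, apply the \emph{classical} single-valued Poincar\'e theorem to the time-one shift on $(S,\nu)$, and then push the recurrence statement back down via $\phi(n)\in V(n)\pi(\phi)$. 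Your approach is more self-contained (no external multivalued recurrence theorem is needed) and makes transparent why the axiomatic framework works: everything reduces to ordinary ergodic theory on $S$. The paper's approach is terser but hides the mechanism inside the cited reference and inside Lemma~\ref{lem_inv_ineq}, whose proof already passes through $S$ anyway. One small quibble: the claim that $\pi^{-1}(B\cap B_\infty)$ is ``manifestly'' $\nu$-measurable is not quite right, since $V(n)^{-1}B=\pi(\sigma(-n,S_B))$ is only analytic in general; your fallback to universal measurability of analytic sets (and hence the $\mu$-completion) is the correct justification, and it suffices.
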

\begin{proof} Because we have $ V(n)^{-1} = V(-n) = V(1)^{-n}$ for all $n \in \mathbb{N}$, we can apply Theorem 2.2 in \cite{aubin1991poincare} to $F=V(1).$
\end{proof}
\begin{theorem}[Main Theorem D, cf. Theorem 2.10 in \cite{faure2013ergodic}]
Let $\mu$ be an invariant measure of a shift-invariant subset $S \subset C(\mathbb{R}, X)$. Then, the closure of the set of all recurrent points for $S$ is a full-measure set.
\end{theorem}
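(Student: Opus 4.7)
The plan is to lift the question to the solution space via a Borel probability measure $\nu$ on $S$ satisfying $\mu = \pi_*\nu$ (which exists by Definition \ref{def_inv}), apply the classical Poincar\'e recurrence theorem to the shift flow $(\sigma, S, \nu)$, and then push the resulting set of $\sigma$-recurrent maps forward to $X$ via the continuous evaluation-at-zero map $\pi$.

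For the recurrence in $S$, I would proceed as follows. The space $S$ inherits second countability from $C_s(\mathbb{R}, X)$, so I may fix a countable base $\{W_n\}_{n \in \mathbb{N}}$ for its topology. Viewing $(\sigma, S, \nu)$ as a continuous $\mathbb{R}$-flow with invariant probability measure $\nu$, the preceding Poincar\'e recurrence theorem (applied with $S$ in place of $X$ and with the single-valued semigroup $V(t) = \sigma(t)$, so that $V(k)^{-1} = \sigma(-k,\cdot)$) yields $\nu\bigl(W_n \cap (W_n)_\infty\bigr) = \nu(W_n)$ for every $n$, where $(W_n)_\infty = \bigcap_N \bigcup_{k \geq N} \sigma(-k, W_n)$. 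If $\phi \in S$ fails to be $\sigma$-recurrent, then some neighborhood $U$ of $\phi$ is disjoint from $\{\sigma(t, \phi) : t \geq T\}$ for some $T$; choosing a basic open set $W_n$ with $\phi \in W_n \subset U$, one obtains $\sigma(k, \phi) \notin W_n$ for all integers $k \geq T$, hence $\phi \in W_n \setminus (W_n)_\infty$. Consequently, the non-recurrent set is contained in the $\nu$-null set $\bigcup_n \bigl(W_n \setminus (W_n)_\infty\bigr)$, so $R_\sigma := \{\phi \in S : \phi \in \omega(\phi)\}$ has $\nu(R_\sigma) = 1$.

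The transfer to $X$ is then direct: if $\phi \in R_\sigma$, there exist $t_k \to \infty$ with $\sigma(t_k, \phi) \to \phi$ in $S$, and continuity of $\pi$ gives $\phi(t_k) = \pi\bigl(\sigma(t_k, \phi)\bigr) \to \pi(\phi) = \phi(0)$, exhibiting $\phi(0) \in \Lambda^+(S_{\phi(0)}) = \omega_S(\phi(0))$. Hence $\pi(R_\sigma) \subset R_S$, where $R_S$ denotes the set of recurrent points of $S$ in $X$. Since $\overline{R_S}$ is closed and therefore Borel, and since $\pi^{-1}(\overline{R_S}) \supset \pi^{-1}(R_S) \supset R_\sigma$, one concludes
\[
\mu(\overline{R_S}) \;=\; \nu\bigl(\pi^{-1}(\overline{R_S})\bigr) \;\geq\; \nu(R_\sigma) \;=\; 1.
\]
The main obstacle is the second-countability-based reduction in the middle paragraph: the preceding Poincar\'e theorem controls returns to one fixed set at a time, whereas $\sigma$-recurrence of $\phi$ requires simultaneous control of returns to a whole neighborhood base at $\phi$; the countable base of $S$ is precisely what bridges these two notions. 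Once this reduction is in place, the remaining steps are formal consequences of continuity of $\pi$ and the relation $\mu = \pi_*\nu$.
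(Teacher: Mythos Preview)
Your proof is correct and follows essentially the same route as the paper: introduce the set $R=\{\phi\in S:\phi\in\omega(\phi)\}$ of $\sigma$-recurrent maps, use Poincar\'e recurrence for the shift flow on $(S,\nu)$ to get $\nu(R)=1$, observe that $\pi(R)$ lands in the set of $S$-recurrent points of $X$, and push forward via $\mu=\pi_*\nu$. The only difference is packaging: the paper shows $\pi(\bar R)\subset \bar A$ and then defers the rest (including the $\nu(\bar R)=1$ step) to Theorem~2.10 of \cite{faure2013ergodic}, whereas you spell out the second-countability argument for topological recurrence on $S$ explicitly and take the closure on the $X$-side only at the last step.
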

\begin{proof}
Let $A = \{x\in X \mid x \in \omega_S(x)\}$ and $R = \{\phi\in S \mid \phi \in \omega(\phi)\}$.
Once we have shown that $\pi\qty(\bar R) \subset \bar A$,
the remainder of the proof proceeds in a manner similar to that in Theorem 2.10 in \cite{faure2013ergodic}.

Let $x \in \pi\qty(\bar R)$. Then, we can find a convergent sequence $\phi_n \in R$ with $\phi_n(0) \to x$ as $n \to \infty$. Because $\phi_n \in \omega(S_{\phi_n(0)})$, $\phi_n(0) \in \omega_S\qty(\phi_n(0))$ for all $n$. Therefore, we can conclude that $x \in \bar A$.
\end{proof}
Finally, we comment on the difference in definition from the one proposed in \cite{novaes} for piecewise-smooth vector fields. The definition of the invariant measure in that study requires that, in our notation, $\mu(A) = \mu(V(t) A)$ for all $t \in \mathbb{R}$ and Borel set $A \subset X$, which is more restrictive compared to our definition. For example, the system considered in Example \ref{ex_fil} has an invariant measure with support on $y = 0$ in the sense of Definition \ref{def_inv}, whereas there is no invariant measure if we use the definition in \cite{novaes}.

\section{Concluding Remarks}\label{conc}
The results obtained thus far can be summarized as follows: as far as we consider Yorke's formalism on compact spaces assuming the global existence of solutions, we obtain generalized concepts regarding invariance directly through the evaluation-at-zero map, demonstrating properties similar to classical ones.

In particular, we may naturally define the ergodic invariant measure using that of the solution space; however, how it relates to the dynamics in the phase space is unclear. Given the possibility of applying ergodic theory to the study of differential inclusions or non-smooth dynamical systems, it is an interesting point to consider. 

Another interesting direction of research will be the application to the study of Filippov systems. While we have used Filippov systems as illustrating examples, this is, of course, a partial treatment. Recently, an orbit-space approach has been proposed to consider their dynamics \cite{gomide2023orbit}. Once we know how the framework presented here relates to it, a new perspective on the dynamics of Fillipov systems might be obtained.

\section*{Acknowledgements}
This study was supported by Grant-in-Aid for JSPS Fellows (20J01101). The author would like to thank the anonymous reviewers for valuable comments and suggestions.

\bibliographystyle{plain}
\bibliography{asym}

\end{document}